\providecommand{\U}[1]{\protect\rule{.1in}{.1in}}
\newtheorem{theorem}{Theorem}
\newtheorem{acknowledgement}[theorem]{Acknowledgement}
\newtheorem{definition}[theorem]{Definition}
\newtheorem{lemma}[theorem]{Lemma}
\newtheorem{proposition}[theorem]{Proposition}
\newtheorem{remark}[theorem]{Remark}
\newenvironment{proof}[1][Proof]{\noindent\textbf{#1.} }{\ \rule{0.5em}{0.5em}}
\begin{document}

\title{Critical graph of a polynomial quadratic differential related to a
Schr\"{o}dinger equation with quartic potential}
\author{Mondher Chouikhi and Faouzi Thabet\\University of Gab\`{e}s, Tunisia}
\maketitle

\begin{abstract}
In this paper, we study the weak asymptotic in the $%
\mathbb{C}
$ plane of some wave functions resulting from the WKB techniques applied to a
Shrodinger equation with quartic oscillator and having some boundary
condition. In first step, we make transformations of our problem to obtain a
Heun equation satisfied by the polynomial part of the WKB wave functions
.Especially , we investigate the properties of the Cauchy transform of the
root counting measure of a re-scaled solutions of the Schrodinger equation, to
obtain a quadratic algebraic equation of the form $\mathcal{C}^{2}\left(
z\right)  +r\left(  z\right)  \mathcal{C}\left(  z\right)  +s\left(  z\right)
=0$, where $r,s$ are also polynomials. In second step, we discuss the
existence of solutions (as Cauchy transform of a signed measures) of this
algebraic equation.This problem remains to describe the critical graph of a
related 4-degree polynomial quadratic differential $-p\left(  z\right)
dz^{2}$. In particular, we discuss the existence(and their number) of finite
critical trajectories of this quadratic differential.

\end{abstract}

\bigskip\textit{2010 Mathematics subject classification: }30C15, 31A35, 34E05.

\textit{Keywords and phrases: }Quantum theory. WKB analysis. Quadratic
differentials. Cauchy transform.

\section{\bigskip Introduction}

Quadratic differentials appear in different mathematical and
mathematical-physical domains, such as, orthogonal polynomials, potential
theory, ordinary differential equations, quantum theory, moduli spaces ...

Recently, quadratic differentials have provided an important tool in the
asymptotic study of some solutions of algebraic equations.

In quantum theory, trajectories of some quadratic differentials have crucial
role in WKB analysis, more precisely, consider the time-independent
Schr\"{o}dinger equation on the complex plane $%
\mathbb{C}
$
\begin{equation}
-\frac{\text{$\hslash$}^{2}}{2m}\frac{d^{2}\psi(z,\text{$\hslash$})}{dz^{2}%
}+V(z,\text{$\hslash$})\psi(x,\text{$\hslash$})=E\psi(z,\text{$\hslash$}).
\label{eq1}%
\end{equation}
where $\hslash$ denotes the constant of Planck, $z,$ the local coordinate,
$\psi$, the wave function,$V\left(  z,\hslash\right)  ,$ the potential, and
$E$, the energy of a system of mass $m.$ The determination of an explicit
solution of equation \ref{eq1} is difficult in general case but a general
solution can be written as a linear combination of particular solutions.The
series expansion in $\hslash^{-1}$ of $\hslash$$^{-2}V\mathcal{(}z,\hslash$)
provide a principal part $V_{0}$ which defines a meromorphic quadratic
differential $\phi$ in the complex plane . The critical graph of $\phi$
\textbf{(}the closure of the union of critical trajectories of $\phi$) ,
called also \textit{Stokes graph,} is important for the WKB analysis which
gives an important method to determine particular solution of Schr\"{o}dinger
equation. General study is more detailed in \cite{ref1} and \cite{ref2}, where
they considered the "classical" case $\hslash$$^{-2}V\left(  z,\hslash\right)
=V_{0}(z).$ The classical Schr\"{o}dinger equation is
\[
H\psi(z)=E\psi(z)
\]
where
\[
H=-\frac{\text{$\hslash$}^{2}}{2m}\frac{d^{2}}{dz^{2}}+V(z)
\]
is the hamiltonian. It is obvious that $\psi$ is an eigenfunction of the
operator $H$. In quantum mechanics, $H$ is generally supposed to be hermitian;
in fact this condition guarantees a crucial property of quantum theory : the
fact that the energy levels are real. Clearly these energy levels are the
eigenvalues of $H.$It is shown that the Hermeticity condition of $H$ is
sufficient and not necessary; see \cite{ref3}. In fact, under PT-symmetry
condition ( space-time reflection ) Bender has shown in \cite{ref4} that the
hamiltonian $H=-\frac{d^{2}}{dx^{2}}+ix^{3}$ (which is not hermitian) has a
real spectre. PT-symmetry condition allows to study many of hamiltonian that
were discarded before.

The PT-symmetry condition was at the origin of many works, especially the
numerical and asymptotic studies of the spectrum of operators $H,$ when $V$ is
polynomial. Bender and Boettcher conjectured in \cite{ref7} that when
$V(z)=(iz)^{m}+\alpha z^{2}$ $(\alpha\in%
\mathbb{R}
),$ the eigenvalues of $H$ are all real and positive. Many works supported
this conjecture, \cite{ref8}, \cite{ref9}, etc...

The first rigourous proof of reality and positivity of the eigenvalues of some
non-self-adjoint hamiltonian $H$ was given by Dorey, Dunning and Tateo in
\cite{ref10}. However, there are some PT-symmetric hamiltonian with polynomial
potentials producing non real eigenvalues, see \cite{ref11}. This PT-symmetric
condition of the hamiltonian with polynomial potentials can be expressed by
\begin{equation}
\overline{V(-\overline{z})}=V(z),z\in%
\mathbb{C}
. \label{*}%
\end{equation}

In this paper, we focus on a case of quartic ( $V$ is a polynomial of degree
$4$) PT-symmetric hamiltonian and we study the weak asymptotic of its
spectrum. Starting from a Schr\"{o}dinger differential equation and using the
Cauchy transform defined in (\ref{cauchy}) we get algebraic equation
(\ref{algeqtiion}), which gives rise to a particular quadratic differential.%
\[
\text{Schr\"{o}dinger equation}\Rightarrow\text{ Algebraic equation}%
\Rightarrow\text{Auadratic differential.}%
\]

More precisely, we consider the eigenvalue problem%

\begin{equation}
\left\{
\begin{array}
[c]{c}%
-y^{\prime\prime}+(t^{4}+2bm^{\frac{2}{3}}t^{2}+2mit)y=\lambda y,\\
\\
y(te^{-i(\frac{\pi}{2}\pm\frac{\pi}{3})})\longrightarrow0,t\longrightarrow
\infty,
\end{array}
\right.  \label{system}%
\end{equation}
where $b$ is real, $m$ is integer and we choose $\hslash=1;$ see
\cite{ref3},\cite{ref5},\cite{ref6}. This problem is a quasi-exactly solvable
: for each $b,$ there are $m$ eigenvalues $\lambda_{m,k}$, with elementary
eigenfunctions
\[
y_{m,k}(t)=p_{m,k}(t)e^{(-i\frac{t^{3}}{3}-ibm^{\frac{2}{3}}t)},
\]
and $p_{m,k}$ are polynomials of degree at most $m-1;$ $k=1,...,m$.

Under condition (\ref{*}), the potential
\[
V(t)=t^{4}+2bm^{\frac{2}{3}}t^{2}+2mit
\]
is PT-symmetric. With the rotation $z=it,$ we obtain the following system%
\begin{equation}
\left\{
\begin{array}
[c]{c}%
-y^{\prime\prime}+(z^{4}-2bm^{\frac{2}{3}}z^{2}+2mz)y=\lambda y,\\
\\
y(te^{\pm i\frac{\pi}{3})})\longrightarrow0,t\rightarrow\infty\text{,}%
\end{array}
\right.  \label{system2}%
\end{equation}
\textit{\ } with $y\in\left\{  pe^{\tau};\deg p\leq m-1\right\}  $ and
$\tau(z)=\frac{z^{3}}{3}-bm^{\frac{2}{3}}z.$

For the sick of simplicity, we denote $y_{m,m}$ by $y_{m}$ and $p_{m,m}$ by
$p_{m}$. Substituting in (\ref{system2}), we obtain
\begin{equation}
p_{m}^{\prime\prime}-2\tau_{m}^{^{\prime}}p_{m}^{^{\prime}}+[2(m-1)z-(\lambda
_{m}+b^{2}m^{\frac{4}{3}})]p_{m}=0. \label{**}%
\end{equation}
It's clear that $p_{m}$ are also the eigenfunctions of the operator
\[
T_{m}(y)=y^{\prime\prime}-2\tau^{^{\prime}}y^{^{\prime}}+2(m-1)zy,
\]
associated to the eigenvalue $\beta_{m}=\lambda_{m}+b^{2}m^{\frac{4}{3}}.$

Let us denote by $\mathcal{C}_{\nu_{m}}$ the cauchy transform of $\nu_{m}%
=\nu(p_{m})$ where $p_{m}$ is the polynomial part of the eigenfunction
$y_{m}.$ The eigenvalue problem (\ref{system}) has infinitely eigenvalues
$\lambda_{m}$ tending to infinity, so, in order to study the asymptotic of the
sequence $\nu_{m}=\nu(p_{m}),$ we shall consider $q_{m}(z)=p_{m}%
(m^{\varepsilon}z)$ a re-scaled polynomial of $p_{m}$ and
\[
\mathcal{\rho}_{m}(z)=\frac{q_{m}^{^{\prime}}(z)}{mq_{m}(z)}=m^{\varepsilon
}\mathcal{C}_{\nu_{m}}(m^{\varepsilon}z)
\]
Substituting in (\ref{**}), we get that%
\[
\mathcal{C}_{\nu_{m}}^{2}(z)-2\frac{(z^{2}-bm^{\frac{2}{3}})}{m}%
\mathcal{C}_{\nu_{m}}(z)+(\frac{2z}{m}-\frac{\beta_{m}}{m^{2}})+\frac
{\mathcal{C}_{\nu_{m}}^{^{\prime}}(z)}{m}=0,
\]
and%
\[
\mathcal{\rho}_{m}^{2}(z)-2(m^{3\varepsilon-1}z^{2}-m^{\varepsilon-\frac{1}%
{3}}b)\mathcal{\rho}_{m}(z)+(2m^{3\varepsilon-1}z-m^{2\varepsilon-2}\beta
_{m})+\frac{\mathcal{\rho}_{m}^{^{\prime}}(z)}{m}=0.
\]
In particular, for $\varepsilon=\frac{1}{3},$ we get%
\[
\mathcal{\rho}_{m}^{2}(z)-2(z^{2}-b)\mathcal{\rho}_{m}(z)+(2z-\beta
_{m}/m^{\frac{4}{3}})+\frac{\mathcal{\rho}_{m}^{^{\prime}}(z)}{m}=0
\]

It was shown in \cite{ref5}, that $\left(  \beta_{m}/m^{\frac{4}{3}}\right)  $
is bounded. By the Helly selection theorem, we may assume that
\[
\underset{m\rightarrow\infty}{\lim}\left(  \beta_{m}/m^{\frac{4}{3}}\right)
=\beta,
\]
and then, there exists a compactly supported positive measure $\nu$ such that
\[
\underset{m\rightarrow\infty}{\lim}\nu_{m}=\nu,\underset{m\rightarrow
\infty}{\lim}\mathcal{\rho}_{m}=\mathcal{C}.
\]
Finally, we obtain the algebraic equation :
\begin{equation}
\mathcal{C}^{2}-2(z^{2}-b)\mathcal{C+}(2z-\beta)=0, \label{algeqtiion}%
\end{equation}
where we are looking for those solutions as Cauchy transform of some
compactly-supported Borelian positive measure in the complex plane $%
\mathbb{C}
$; any connected curve of the support of such a measure (if exists) coincides
with a finite critical trajectory of a quadratic differential on the Riemann
sphere $\widehat{%
\mathbb{C}
}$ of the form $-p\left(  t\right)  dt^{2},$ where $p$ is polynomial of degree
$4,$(Subsection \ref{eqal}). The general possible configurations of the
critical graph of this kind of quadratic differential are will known (see ,,),
but the main goal of this work (besides the precise description of the
critical graph) is the investigation of the number of finite critical
trajectories when $p$ is a real polynomial, (Subsection \ref{qd}).

We notice that if we start with the eigenvalue problem%
\[
\left\{
\begin{array}
[c]{c}%
-y^{\prime\prime}+(t^{4}+2b_{m}t^{2}+2mit)y=\lambda y\\
\\
y(te^{-i(\frac{\pi}{2}\pm\frac{\pi}{3})})\rightarrow0,t\rightarrow\infty,
\end{array}
\right.
\]
\ then we analyse the case when $b_{m}=\mathcal{O(}m^{\frac{2}{3}}).$The case
$\underset{m\rightarrow\infty}{\lim}b_{m}/m^{\frac{2}{3}}=0$ is studied in
\cite{ref6}.

\section{Quadratic differentials}

\bigskip Below, we describe the critical graphs of the family of quadratic
differentials on the Riemann sphere $\overline{%
\mathbb{C}
}$%
\[
\varpi\left(  z\right)  =-p\left(  z\right)  dz^{2},
\]
where $p$ is a monic real polynomial of degree $4.$

We begin our investigation by some immediate observations from the theory of
quadratic differentials. For more details, we refer the reader to
\cite{Strebel},\cite{jenkins}...

Recall that \emph{finite critical points }of the polynomial quadratic
differential $\varpi$ are its zeros and simple poles; poles of order $2$ or
greater then $1$ called \emph{infinite critical points. }All other points of
$\overline{%
\mathbb{C}
}$ are called \emph{regular points}.

With the parametrization $u=1/z$, we get
\[
\ \varpi\left(  u\right)  =\left(  -\frac{1}{u^{8}}+\mathcal{O}(\frac{1}%
{u^{7}})\right)  du^{2},\quad u\rightarrow0;
\]
thus, infinity is an \emph{infinite critical point }of $\varpi,$ as a pole of
order $8$ and $6$ respectively. \emph{Horizontal trajectories} (or just
trajectories) of the quadratic differential $\varpi$ are the zero loci of the
equation%
\begin{equation}
\mathcal{\Re}\int^{z}\sqrt{p\left(  t\right)  }\,dt=\text{\emph{const}},
\label{eq traj}%
\end{equation}
or equivalently%
\[
\varpi=-p\left(  z\right)  dz^{2}>0.
\]
If $z\left(  t\right)  ,t\in%
\mathbb{R}
$ is a horizontal trajectory, then the function
\[
t\longmapsto\Im\int^{t}\sqrt{p\left(  z\left(  u\right)  \right)  }z^{\prime
}\left(  u\right)  du
\]
is monotone.

The \emph{vertical} (or, \emph{orthogonal}) trajectories are obtained by
replacing $\Im$ by $\Re$ in equation (\ref{eq traj}). The horizontal and
vertical trajectories of the quadratic differential $\varpi$ produce two
pairwise orthogonal foliations of the Riemann sphere $\overline{%
\mathbb{C}
}$.

A trajectory passing through a critical point of $\varpi$ is called
\emph{critical trajectory}. In particular, if it starts and ends at a finite
critical point, it is called \emph{finite critical trajectory }or\emph{\ short
trajectory}, otherwise, we call it an \emph{infinite critical trajectory}. A
short trajectory is called \emph{unbrowken }if it does not pass through any
finite critical points except its two endpoints. The closure the set of finite
and infinite critical trajectories, that we denote by $\Gamma_{p},$ is called
the \emph{critical graph}.

The local structure of the trajectories is as follow :

\begin{itemize}
\item At any regular point, horizontal (resp. vertical) trajectories look
locally as simple analytic arcs passing through this point, and through every
regular point of $\varpi$ passes a uniquely determined horizontal (resp.
vertical) trajectory of $\varpi;$ these horizontal and vertical trajectories
are locally orthogonal at this point.

\item From each zero with multiplicity $r$ of $\varpi,$ there emanate $r+2$
critical trajectories spacing under equal angles $2\pi/(r+2)$. 

\item At the pole $\infty$, there are $6$ asymptotic directions (called
\emph{critical directions}) spacing under equal angle $\pi/3$, and a
neighborhood $\mathcal{U}$ of this pole, such that each trajectory entering
$\mathcal{U}$ stays in $\mathcal{U}$ and tends to $\infty$ following one of
the critical directions. These critical directions are ;
\[
D_{k}=\left\{  z\in%
\mathbb{C}
:\arg\left(  z\right)  \equiv0\operatorname{mod}\left(  i\frac{\left(
2k+1\right)  \pi}{6}\right)  \right\}  ;k=0,...,5.
\]
See Figure \ref{1}.
\end{itemize}
\begin{figure}[th]
\centering\includegraphics[height=1.8in,width=2.8in]{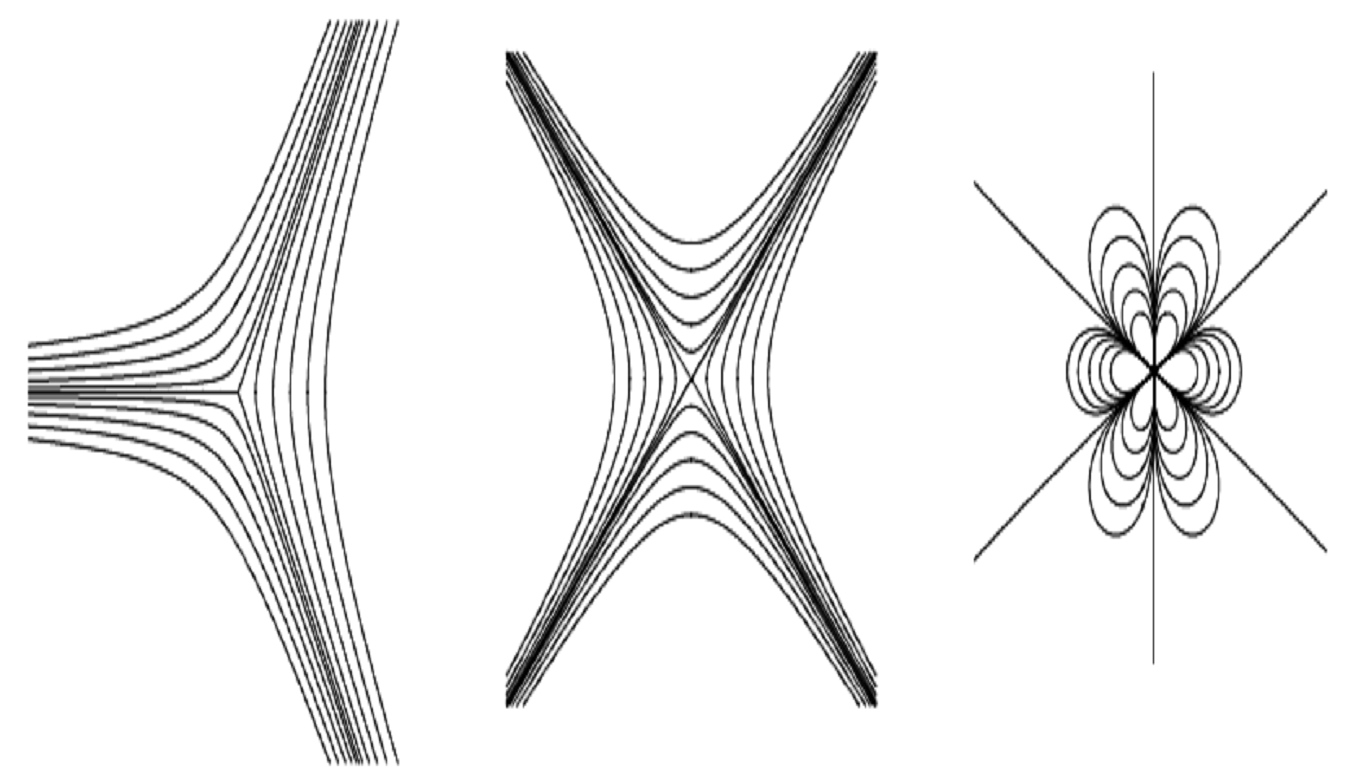}\caption{Structure of the trajectories near a simple zero (left),double zero (center), and pole of order 8 (right)}%
\label{1}%
\end{figure}
We have the following observations :

\begin{itemize}
\item If $\gamma$ is a horizontal trajectory of $\varpi$, then, $\mathcal{\Im
}\int^{z}\sqrt{p\left(  t\right)  }\,dt$\emph{ }is monotone\emph{ }on
$\gamma.$

\item \bigskip If two different trajectories are not disjoint, then their
intersection must be a zero of the quadratic differential.

\item A necessary condition for the existence of a short trajectory connecting
finite critical points of $\varpi$ is the existence of a Jordan arc $\gamma$
connecting them, such that
\begin{equation}
\Re\int_{\gamma}\sqrt{p\left(  t\right)  }dt=0. \label{cond necess}%
\end{equation}
However, we will see that this condition is not sufficient.

\item Since the quadratic differential $\varpi$ has only two poles, Jenkins
Three-pole Theorem (see \cite[Theorem 15.2]{Strebel}) asserts that the
situation of the so-called recurrent trajectory (whose closure might be dense
in some domain in $%
\mathbb{C}
$) cannot happen.

\item Any critical trajectory which is not finite diverges to $\infty$
following one of the directions $D_{k}$ described above.
\end{itemize}

\bigskip A very helpful tool that will be used in our investigation is the
Teichm\"{u}ller lemma (see \cite[Theorem 14.1]{Strebel}).

\begin{definition}
\bigskip A domain in $\overline{%
\mathbb{C}
}$ bounded only by segments of horizontal and/or vertical trajectories of
$\varpi$ (and their endpoints) is called $\varpi$-polygon.
\end{definition}

\begin{lemma}
[Teichm\H{u}ller]\label{teich lemma} Let $\Omega$ be a $\varpi$-polygon, and
let $z_{j}$ be the critical points on the boundary $\partial\Omega$ of
$\Omega,$ and let $t_{j}$ be the corresponding interior angles with vertices
at $z_{j},$ respectively. Then%
\begin{equation}
\sum\left(  1-\dfrac{\left(  n_{j}+2\right)  t_{j}}{2\pi}\right)  =2+m,
\label{Teich equality}%
\end{equation}
where $n_{j}$ are the multiplicities of $z_{j}=1,$ and $m$ is the number of
zeros of $\varpi$ inside $\Omega.$
\end{lemma}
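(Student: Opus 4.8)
The plan is to prove the identity (\ref{Teich equality}) via the Gauss--Bonnet theorem applied to the flat conformal metric naturally attached to $\varpi$. Recall that on $\overline{\mathbb{C}}$ minus the critical points of $\varpi$ the metric $ds=|p(z)|^{1/2}\,|dz|$ is flat: near a regular point one introduces the natural parameter $w=\int^{z}\sqrt{p(t)}\,dt$, in which $\varpi=dw^{2}$ and $ds=|dw|$ is the Euclidean metric, so its curvature vanishes identically off the critical set. Horizontal and vertical trajectories are, by definition, the loci $\Im w=\mathrm{const}$ and $\Re w=\mathrm{const}$, hence straight segments in the natural parameter, and therefore geodesics of this metric with vanishing geodesic curvature.

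Next I would record the local cone structure at a finite critical point $z_{j}$ of multiplicity $n_{j}$ (so $n_{j}\ge 1$ for a zero and $n_{j}=-1$ for a simple pole). There $w-w(z_{j})\sim c\,(z-z_{j})^{(n_{j}+2)/2}$, so the natural parameter wraps a punctured neighbourhood of $z_{j}$ onto a cone of total angle $(n_{j}+2)\pi$; equivalently, a sector of Euclidean opening $t_{j}$ with vertex at $z_{j}$ is mapped to a sector of opening $(n_{j}+2)t_{j}/2$ in the flat metric. Thus each \emph{interior} critical point of multiplicity $n_{i}$ carries concentrated curvature $2\pi-(n_{i}+2)\pi=-n_{i}\pi$, while the interior angle of the $\varpi$-polygon at a \emph{boundary} critical point $z_{j}$, measured in the flat metric, equals $\beta_{j}=(n_{j}+2)t_{j}/2$.

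Then I would apply Gauss--Bonnet to $\Omega$, which is a topological disk bounded by geodesic arcs with corners only at the $z_{j}$ (at a regular boundary point the metric is smooth and the two trajectory arcs meet orthogonally, a case that may be incorporated by allowing $n_{j}=0$). Since the boundary edges are geodesics and the only curvature present is the concentrated curvature at the interior critical points, Gauss--Bonnet reads
\[
\sum_{i\ \mathrm{interior}}(-n_{i}\pi)\;+\;\sum_{j}\bigl(\pi-\beta_{j}\bigr)\;=\;2\pi\,\chi(\Omega)=2\pi .
\]
Writing $m=\sum_{i\ \mathrm{interior}}n_{i}$ for the number of zeros inside $\Omega$ counted with multiplicity, substituting $\beta_{j}=(n_{j}+2)t_{j}/2$, and dividing by $\pi$ gives $\sum_{j}\bigl(1-\tfrac{(n_{j}+2)t_{j}}{2\pi}\bigr)=2+m$, which is (\ref{Teich equality}).

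The step needing the most care is the bookkeeping in Gauss--Bonnet: one must check that the trajectory edges contribute nothing (geodesics in the natural parameter), that the curvature is genuinely a finite sum of atoms at the interior critical points with the stated masses $-n_{i}\pi$, and that the passage from the Euclidean vertex angle $t_{j}$ to the flat-metric angle $\beta_{j}$ carries exactly the factor $(n_{j}+2)/2$ with the correct sign --- this is precisely where the multiplicities enter. One also uses implicitly that a $\varpi$-polygon contains no infinite critical point in its closure (in our situation the only such point is $\infty$, a pole of order $8$), so that the metric has no non-cone singularities on $\overline{\Omega}$, and that $\Omega$ is simply connected, whence $\chi(\Omega)=1$ and the right-hand side is $2+m$. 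Granting these points, the asserted equality follows by the elementary rearrangement above.
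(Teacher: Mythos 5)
The paper does not prove this lemma at all: it is quoted verbatim from Strebel \cite[Theorem 14.1]{Strebel}, whose own derivation goes through the argument principle applied to $\sqrt{p(z)}\,dz$ along $\partial\Omega$. Your Gauss--Bonnet argument is a legitimate and standard alternative, and the bookkeeping you carry out is correct as far as it goes: the metric $|p|^{1/2}|dz|$ is flat off the critical set, trajectories are geodesics, an interior zero of multiplicity $n_{i}$ is a cone point of angle $(n_{i}+2)\pi$ and hence carries curvature $-n_{i}\pi$, the flat-metric vertex angle is $\beta_{j}=(n_{j}+2)t_{j}/2$, and the rearrangement of $\sum_{i}(-n_{i}\pi)+\sum_{j}(\pi-\beta_{j})=2\pi$ does yield (\ref{Teich equality}) with $m=\sum_{i}n_{i}$.

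There is, however, a genuine gap relative to how the lemma is actually used in this paper. You explicitly assume that $\overline{\Omega}$ contains no infinite critical point, ``so that the metric has no non-cone singularities.'' But every invocation of Lemma \ref{teich lemma} in the proof of Lemma \ref{conse teich1} concerns a $\varpi$-polygon with a vertex at $\infty$, which is a pole of order $8$ (so $n_{j}=-6$ in the convention of the text, or $-8$ depending on normalization, with interior angle $t_{j}=0$ contributing the term $1$ to the left-hand side); without that contribution the contradictions drawn in parts (ii)--(v) do not follow. At a pole of order $\ge 3$ the natural parameter $w\sim c\,z^{(n_{j}+2)/2}$ blows up, the point is at infinite distance in the flat metric, and the vertex contribution must be obtained by excising a small disk around the pole, integrating the geodesic curvature of the cut-off arc, and passing to the limit --- it is not a cone angle. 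This limiting argument does produce the same term $1-\tfrac{(n_{j}+2)t_{j}}{2\pi}$ with the signed exponent $n_{j}+2<0$, so the statement survives, but your proof as written does not establish the case that the paper needs. Either supply that limit computation or fall back on Strebel's argument-principle proof, which treats zeros and poles uniformly.
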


\subsection{\bigskip Critical graph of $\varpi$ \label{qd}}

We focus on the case when $p$ is a real monic polynomial having two simple
real zeros and two conjugate zeros. By a linear change of variables, we may
assume that
\[
p\left(  z\right)  =\left(  z^{2}-1\right)  \left(  z-a\right)  \left(
z-\overline{a}\right)  ,
\]
where $a\in%
\mathbb{C}
_{+}=\left\{  z\in%
\mathbb{C}
\mid\Im\left(  z\right)  >0\right\}  .$

\begin{lemma}
\label{residue} For any $a\in%
\mathbb{C}
_{+},$ condition (\ref{cond necess}) is satisfied for the pairs of zeros
$\left(  -1,1\right)  $ and $\left(  a,\overline{a}\right)  $.
\end{lemma}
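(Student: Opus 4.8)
The plan is to treat the two pairs separately; in each case the reality of $p$ is what forces the real part of the relevant period of $\sqrt{p}\,dt$ to vanish. For the pair $\left(-1,1\right)$ I would take $\gamma$ to be the real segment $\left[-1,1\right]$, which is a Jordan arc joining the two zeros. On its interior one has $p(x)=(x^{2}-1)(x-a)(x-\overline{a})=(x^{2}-1)\left|x-a\right|^{2}<0$, so every branch of $\sqrt{p}$ is purely imaginary there, whence $\int_{-1}^{1}\sqrt{p(t)}\,dt\in i\mathbb{R}$ and condition (\ref{cond necess}) holds at once.

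For the pair $\left(a,\overline{a}\right)$ the idea is a residue computation of the period. First I would fix a Jordan arc $\gamma_{2}$ from $a$ to $\overline{a}$ that is disjoint from $\left[-1,1\right]$ (for instance one meeting the real axis only at the point $2$) and cut $\overline{\mathbb{C}}$ along $\left[-1,1\right]$ and along $\gamma_{2}$. Since each of the four simple zeros $-1,1,a,\overline{a}$ lies on exactly one of these two cuts, $\sqrt{p}$ has a single-valued branch $R$ on the complement, normalized by $R(z)=z^{2}+O(z)$ at $\infty$. The crucial observation is that, $p$ having real coefficients, $\overline{R(\overline{z})}$ is again a branch of $\sqrt{p}$ with the same normalization and therefore equals $R$; hence the Laurent expansion $R(z)=z^{2}+c_{3}z+c_{2}+c_{1}z^{-1}+\cdots$ at infinity has real coefficients. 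Consequently, for $\rho$ large,
\[
\oint_{\left|z\right|=\rho}R(z)\,dz=2\pi i\,c_{1}\in i\mathbb{R}.
\]

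Next I would collapse the circle $\left|z\right|=\rho$ onto the two cuts, writing $\oint_{\left|z\right|=\rho}R\,dz=\oint_{C_{1}}R\,dz+\oint_{C_{2}}R\,dz$ with $C_{j}$ a small loop around the $j$-th cut. Because $R$ changes sign across a cut joining two branch points, $\oint_{C_{j}}R\,dz$ equals, up to sign, twice the integral of a fixed boundary value $R_{+}$ of $R$ along the $j$-th cut; in particular $\oint_{C_{1}}R\,dz=\pm 2\int_{-1}^{1}R_{+}(x)\,dx\in i\mathbb{R}$ by the $\left[-1,1\right]$ computation, and the left-hand side is in $i\mathbb{R}$ by the residue step. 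Subtracting, $\int_{\gamma_{2}}R_{+}(t)\,dt\in i\mathbb{R}$, i.e. $\Re\int_{\gamma_{2}}\sqrt{p(t)}\,dt=0$, which is (\ref{cond necess}) for the second pair.

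The $\left(-1,1\right)$ case and the verification that $c_{1}\in\mathbb{R}$ are routine; the delicate part, where I would be most careful, is the bookkeeping for the multivalued $\sqrt{p}$ — checking that cutting along $\left[-1,1\right]$ and $\gamma_{2}$ really single-values it with the stated behaviour at $\infty$, and keeping the orientations and the "which side of the cut" conventions consistent in the deformation of the large circle onto the cuts. (Alternatively, for $\left(a,\overline{a}\right)$ one can avoid the contour deformation by choosing $\gamma_{2}$ symmetric under $z\mapsto\overline{z}$ and crossing the real axis where $p>0$, and using $\overline{\sqrt{p(\overline{z})}}=\sqrt{p(z)}$ to see directly that $\int_{\gamma_{2}}\sqrt{p}$ is purely imaginary.)
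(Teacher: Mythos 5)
Your proof is correct, but for the pair $\left(a,\overline{a}\right)$ you take a genuinely different main route from the paper. The paper disposes of that pair in three lines by pure conjugation symmetry: since $p$ is real, $\overline{\sqrt{p\left(t\right)}}=\sqrt{p\left(\overline{t}\right)}$, so writing $\int_{\overline{a}}^{a}\sqrt{p}\,dt=\int_{1}^{a}\sqrt{p}\,dt-\int_{1}^{\overline{a}}\sqrt{p}\,dt$ and substituting $u=\overline{t}$ in the second integral exhibits the whole period as $2i\,\Im\bigl(\int_{1}^{a}\sqrt{p}\,dt\bigr)$, which has zero real part --- this is exactly the alternative you relegate to your final parenthesis. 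Your main argument instead runs through the residue of $\sqrt{p}$ at infinity and the collapse of a large circle onto the two cuts; interestingly, this is precisely the supplementary computation the paper appends to the same proof in order to derive the necessary condition $\Im\left(\alpha^{3}-4\alpha\beta+8\gamma\right)=0$ for a general complex quartic, so you have in effect rediscovered that computation and specialized it to real coefficients (where $c_{1}\in\mathbb{R}$ is automatic), then combined it with the elementary observation that $p<0$ on $\left(-1,1\right)$ forces $\int_{-1}^{1}\sqrt{p}\,dx\in i\mathbb{R}$ --- that last observation is also how the paper proves part (i) of Lemma \ref{conse teich1}. The trade-off: the symmetry argument is shorter and avoids all branch-cut and orientation bookkeeping, while your residue route is more robust in that it extends verbatim to non-real $p$ and does not require choosing a conjugation-symmetric arc; your bookkeeping (single-valuedness of $R$ on the complement of the two cuts, $R_{+}=-R_{-}$ across each cut, reality of the Laurent coefficients at $\infty$) is all sound, and the signs you leave unresolved are harmless since only membership in $i\mathbb{R}$ is at stake.
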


\begin{lemma}
\label{conse teich1}

\begin{itemize}
\item[(i)] The segment $\left[  -1,1\right]  $ is always a short trajectory
connecting $\pm1.$

\item[(ii)] Two critical trajectories emanating from the same zero of
$p\left(  z\right)  $ cannot diverge to $\infty$ in the same direction.

\item[(iii)] No critical trajectory emanating from $z=1$ diverges to $\infty$
in the direction $D_{2}.$

\item[(iv)] Exactly one trajectory emanating from $z=-1$ diverges to $\infty$
in the upper half plane, and it follows the direction $D_{2}.$

\item[(v)] There are at most three short trajectories.
\end{itemize}
\end{lemma}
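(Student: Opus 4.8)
The plan is to derive all five items from elementary sign analysis, the Teichm\"{u}ller lemma (Lemma~\ref{teich lemma}), and the reflection symmetry: since $p$ has real coefficients, $z\mapsto\bar z$ carries horizontal (resp. vertical) trajectories of $\varpi$ to trajectories of the same kind and permutes the critical directions by $\overline{D_0}=D_5$, $\overline{D_1}=D_4$, $\overline{D_2}=D_3$; recall that the four zeros $\pm1,a,\bar a$ of $p$ are simple. For (i) I would note that on $(-1,1)$ one has $x^2-1<0$ and $(x-a)(x-\bar a)=|x-a|^2>0$, so $p(x)<0$ and $\varpi=-p(x)\,dx^2>0$; hence $[-1,1]$ lies on a horizontal trajectory joining $\pm1$, i.e. it is a short trajectory. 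From $p(z)\sim\pm2|1\mp a|^2(z\mp1)$ near $z=\pm1$ one reads off that the three rays issuing from $z=1$ point in the directions $\tfrac{\pi}{3},\pi,\tfrac{5\pi}{3}$ and those from $z=-1$ in $0,\tfrac{2\pi}{3},\tfrac{4\pi}{3}$, the $\pi$-ray from $1$ and the $0$-ray from $-1$ forming $[-1,1]$; write $\gamma^{\pm1}_+,\gamma^{\pm1}_-$ for the remaining two rays at $\pm1$ (a conjugate pair). On $(1,\infty)$ and $(-\infty,-1)$ one has $p>0$, so these are vertical trajectory rays and $\mathbb{R}\cup\{\infty\}$ is a cycle of horizontal/vertical trajectory arcs bounding $\mathbb{C}_+$.

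For (ii) I would argue by contradiction. Suppose two trajectory rays $\gamma_1,\gamma_2$ from a zero $z_0$ both diverge to $\infty$ along the same direction $D_k$. In the distinguished parameter $\varpi$ has a pole of order $8$ and trajectories approach the critical directions without spiralling, so $\gamma_1,\gamma_2$ are mutually tangent at $\infty$; together with $z_0$ and $\infty$ they bound two domains. Let $\Omega$ be the one subtending the angle $\tfrac{2\pi}{3}$ at $z_0$ (the sector between $\gamma_1,\gamma_2$ not containing the third ray from $z_0$). Then $\Omega$ is a $\varpi$-polygon whose only vertices are $z_0$ (a simple zero, multiplicity $1$) and $\infty$ (a pole of order $8$, multiplicity $-8$), containing $m\le3$ zeros; since the term at $z_0$ equals $1-\tfrac{3}{2\pi}\cdot\tfrac{2\pi}{3}=0$, Lemma~\ref{teich lemma} gives $1+\tfrac{3t_\infty}{\pi}=2+m$, i.e. $t_\infty=\tfrac{(m+1)\pi}{3}\in\{\tfrac{\pi}{3},\tfrac{2\pi}{3},\pi,\tfrac{4\pi}{3}\}$, whereas tangency forces $t_\infty\in\{0,2\pi\}$ ($0$ if $\Omega$ is the thin region between the rays, $2\pi$ otherwise) --- a contradiction.

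For (v) I would use the same mechanism. Let $\mathcal{G}$ be the graph on $\{\pm1,a,\bar a\}$ whose edges are the (unbroken) short trajectories. A cycle in $\mathcal{G}$ contains a simple one, which (distinct horizontal trajectories meet only at zeros) is a Jordan curve bounding a $\varpi$-polygon all of whose vertices are simple zeros with interior angles in $\{\tfrac{2\pi}{3},\tfrac{4\pi}{3}\}$; then $\sum_j\bigl(1-\tfrac{3t_j}{2\pi}\bigr)\le0<2+m$, contradicting Lemma~\ref{teich lemma}. In particular $\mathcal{G}$ has no loops and no multiple edges, so it is a forest on four vertices and has at most three edges.

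For (iii)--(iv) I would first establish that $\gamma^{1}_+$ and $\gamma^{-1}_+$ are confined to the closed upper half-plane, touching $\mathbb{R}$ only at their base points: $\Re\int^z\sqrt{p(t)}\,dt$ is constant along $[-1,1]$ and strictly monotone along each of $(1,\infty),(-\infty,-1)$, so the level set of $\Re\int^z\sqrt{p(t)}\,dt$ through $\pm1$ meets $\mathbb{R}$ exactly in $[-1,1]$; since $\gamma^{\pm1}_+$ lies in that level set, leaves $\pm1$ into $\mathbb{C}_+$, and cannot reach the other of $\pm1$ (excluded by (v)), it stays in the closed upper half-plane. Assuming provisionally that $\gamma^{1}_+,\gamma^{-1}_+$ do not terminate at $a$, they are disjoint arcs from $1$ and $-1$ to $\infty$ along directions $D_i,D_j\in\{D_0,D_1,D_2\}$; not crossing, they arrive at $\infty$ in the angular order $0<D_i<D_j<\pi$, and together with $[-1,1]$ and the real rays they split the closed upper half-plane into three $\varpi$-polygons. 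Applying Lemma~\ref{teich lemma} to the central one $B$ (bounded by $\gamma^{1}_+$, $[-1,1]$, $\gamma^{-1}_+$, with interior angles $\tfrac{2\pi}{3}$ at $\pm1$, angle $D_j-D_i$ at $\infty$, and $m_B\in\{0,1\}$ zeros inside) gives $D_j-D_i=\tfrac{(m_B+1)\pi}{3}$; since among $\{D_0,D_1,D_2\}$ the value $D_j-D_i=\tfrac{2\pi}{3}$ is realized only by $(D_i,D_j)=(D_0,D_2)$, one obtains $\gamma^{1}_+\to D_0$, $\gamma^{-1}_+\to D_2$ as soon as $a\in B$, which together with the conjugates yields (iii) and (iv). The points needing real work are: (a) ruling out $a\in A$ or $a\in C$ and the leftover value $(D_i,D_j)=(D_0,D_1)$, which I would do by a Teichm\"{u}ller count on $A$ (resp. $C$), carefully accounting for the slits cut out by the trajectories issuing from $a$; and (b) disposing of the configurations in which $\gamma^{-1}_+$ (hence, by symmetry, $\gamma^{-1}_-$) is a short trajectory to $a$ (hence to $\bar a$), where I would combine Lemma~\ref{residue} with the no-cycle principle of (v) to obstruct the resulting graph of short trajectories. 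I expect (a)--(b), i.e. the bookkeeping of where $a,\bar a$ sit relative to these trajectories, to be the genuine difficulty.
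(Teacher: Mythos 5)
Your treatments of (i), (ii) and (v) are correct and essentially coincide with the paper's. Item (i) is the sign observation $p<0$ on $(-1,1)$ (equivalently, $\Re\int_{-1}^{s}\sqrt{p(t)}\,dt=0$ there), and (ii) and (v) are exactly the paper's Teichm\"{u}ller counts: for (ii), the digon with vertices $z_{0}$ and $\infty$ for which the left-hand side of (\ref{Teich equality}) is at most $1$ while the right-hand side is at least $2$; for (v), the observation that any cycle of short trajectories (in particular a double edge between two zeros) would bound a $\varpi$-polygon all of whose vertex terms lie in $\{0,-1\}$, so the short trajectories form a forest on four vertices and number at most three. Your graph-theoretic phrasing of (v) is in fact cleaner than the paper's one-line dismissal, and your confinement argument (the level set of $\Re\int^{z}\sqrt{p}$ through $\pm1$ meets the real axis only in $[-1,1]$) is sound.

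The gap is in (iii)--(iv), and you flag it yourself: the items you label (a) and (b) at the end --- ruling out the pair $(D_{i},D_{j})=(D_{0},D_{1})$, deciding where $a$ sits relative to the regions $A,B,C$, and disposing of the configurations in which $\gamma_{-1}^{+}$ is a short trajectory to $a$ --- are precisely the content of (iii) and (iv), not peripheral bookkeeping, so what you have written for these two items is a strategy rather than a proof. Note that your central-polygon identity $D_{j}-D_{i}=(m_{B}+1)\pi/3$ already yields (iii) for free whenever both upper rays diverge, since non-crossing forces $\gamma_{1}^{+}$ to arrive at the smaller angle $D_{i}<D_{j}\leq 5\pi/6$, so it cannot follow $D_{2}$; what genuinely remains is (iv), i.e. excluding $(D_{0},D_{1})$ and the short-trajectory cases. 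The paper closes (iii) by a different and cheaper route: if a ray from $z=1$ followed $D_{2}$, the unique upward ray from $z=-1$ would be trapped between that ray and the real axis, hence forced to follow $D_{2}$ as well (no other critical direction is available in the remaining sector), and then the $\varpi$-polygon bounded by the two rays, $[-1,1]$ and $\infty$ violates Lemma \ref{teich lemma}; item (iv) is then deduced by combining (ii) and (iii) with one more application of Lemma \ref{teich lemma}. To complete your proof you would need either to import that trapping argument or to actually carry out your cases (a)--(b).
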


\bigskip Let us consider the set
\begin{equation}
\Gamma=\left\{  z\in%
\mathbb{C}
:\Re\left(  \int_{0}^{z}\sqrt{\left(  t-z\right)  \left(  t-\overline
{z}\right)  \left(  t^{2}-1\right)  }dt=0\right)  \right\}  . \label{eq gamma}%
\end{equation}
Then we have the following observations :

\begin{itemize}
\item the choice of the square root does not play any role in the integral
defining (\ref{eq gamma});

\item $\left[  -1,1\right]  \subset\Gamma;$

\item since $p$ is an real polynomial, it can be shown easily that set
$\Gamma$ is symmetric with respect to the real and imaginary axis;

\item direct calculations show that the points $\pm1+2\exp\left(  \pm
\frac{i\pi}{3}\right)  \in\Gamma.$
\end{itemize}

The following

\begin{lemma}
\label{asymptotic gamma} Let $x$ be the unique real defined by :%
\begin{equation}
\sinh\left(  \frac{2}{3\cot x\sin^{3}x}-\frac{\cot x}{\sin x}\right)  =\cot
x,x\in\left]  0,\frac{\pi}{2}\right[  . \label{asympt en l'inf}%
\end{equation}

Then, we have
\begin{align*}
\underset{z\rightarrow\infty,z\in\Gamma\cap%
\mathbb{C}
_{+}^{+}}{\lim}\frac{z}{\left\vert z\right\vert }  &  =\exp\left(  ix\right)
;\\
\underset{z\rightarrow1,z\in\Gamma\cap%
\mathbb{C}
_{+}^{+}}{\lim}\frac{z-1}{\left\vert z-1\right\vert }  &  =\exp\left(
i\frac{\pi}{3}\right)  ,
\end{align*}
where we denote by $%
\mathbb{C}
_{+}^{+}=\left\{  z\in%
\mathbb{C}
\mid\Re\left(  z\right)  ,\Im\left(  z\right)  >0\right\}  .$
\end{lemma}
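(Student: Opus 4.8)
My plan is to analyze the branch of $\Gamma$ lying in the open first quadrant $\mathbb{C}_+^+$ by parametrizing it through the local behavior of the integral $F(z) = \int_0^z \sqrt{(t-z_0)(t-\overline{z_0})(t^2-1)}\,dt$ near the two relevant boundary points, $z=1$ and $z=\infty$. For the behavior near $z=1$: since $p$ has a simple zero at $z=1$, the three critical trajectories emanating from $z=1$ leave under equal angles $2\pi/3$, and by Lemma \ref{conse teich1}(i) one of them is the segment $[-1,1]$ pointing in the direction $\arg = \pi$. Hence the other two emanate in the directions $\arg = \pi/3$ and $\arg = -\pi/3$. The branch $\Gamma \cap \mathbb{C}_+^+$ is the one leaving in the direction $\exp(i\pi/3)$, which gives the second limit immediately; I would make this rigorous by writing $p(t) = (t-1)g(t)$ with $g(1) = 2(1-z_0)(1-\overline{z_0}) \neq 0$, so that near $z=1$, $F(z) - F(1) \sim \tfrac{2}{3}\sqrt{g(1)}(z-1)^{3/2}$, and the condition $\Re(F(z)-F(1)) = 0$ (using $\Re F(1) = 0$ from Lemma \ref{residue}, since $[-1,1]\subset\Gamma$) forces $\arg(z-1)^{3/2} \equiv \pi/2 \pmod{\pi}$ after absorbing $\arg\sqrt{g(1)}$; choosing the solution in the first quadrant pins down $\arg(z-1) \to \pi/3$.

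For the behavior at infinity, I would expand $\sqrt{p(t)} = t^2\sqrt{(1-z_0/t)(1-\overline{z_0}/t)(1-1/t^2)}$ for large $t$ and integrate term by term. The leading term contributes $\tfrac{1}{3}z^3$, and the subleading terms contribute a quadratic polynomial in $z$ plus a logarithmic term; the coefficient of the $\log z$ term is (up to a constant) the residue of $\sqrt{p(t)}$ at infinity, which by Lemma \ref{residue} is purely imaginary (this is exactly the content of condition (\ref{cond necess}) holding for $(a,\overline a)$). Writing $z = R e^{ix}$ and letting $R\to\infty$, the dominant term in $\Re F(z)$ is $\Re(\tfrac13 R^3 e^{3ix}) = \tfrac13 R^3\cos(3x)$. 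But requiring this to vanish in the limit would only give the naive critical directions $x = \pi/6, \pi/2$; the true asymptotic direction is a correction to one of these, so I must track the competition between the cubic term and the next terms. The quadratic-in-$z$ part scales like $R^2$ and the logarithmic part like $\log R$, so neither can balance an $R^3$ term unless $\cos(3x)\to 0$; this shows $x \to \pi/6$, and then I would substitute $x = \pi/6 + \delta/R + \cdots$ — no, that is not quite right since the subleading term is $R^2$, not $R^0$ — so the correct ansatz gives $\cos(3x) = O(1/R)$, i.e. $x = \pi/6 + c/R + o(1/R)$, and matching the $R^2$-coefficient of $\Re F$ against the $R^3 \cdot (c/R) = cR^2$ term determines $c$. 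The equation (\ref{asympt en l'inf}) should emerge as the closed-form relation among these coefficients after the transcendental functions $\sinh, \cot$ are produced by the imaginary residue at infinity interacting with the real part of the quadratic coefficients.

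The main obstacle I anticipate is the last step: extracting the precise transcendental equation (\ref{asympt en l'inf}) in the stated form. The appearance of $\sinh$ strongly suggests that the correct parametrization is not a simple asymptotic expansion but rather an exact integral along the trajectory — i.e. one should use the fact that on a horizontal trajectory the quantity $\Im F$ is a monotone real parameter, set up the trajectory as $\{z : \Re F(z) = \Re F(1) = 0\}$ globally, and then express the condition that this level curve, which starts at $z=1$ in direction $e^{i\pi/3}$, escapes to infinity along a ray. The hyperbolic functions then come from integrating $\sqrt{p}$ explicitly along a conveniently chosen path (perhaps a ray from the origin, using that $0 \in \Gamma$ is the base point and that $\pm 1 + 2e^{\pm i\pi/3} \in \Gamma$), where the real part of an elementary antiderivative involving $\log$ of a quadratic gives rise to $\operatorname{arctanh}$-type terms whose inversion is $\sinh$. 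I would therefore spend most of the effort computing $\Re\int_0^{Re^{ix}}\sqrt{p(t)}\,dt$ in closed form modulo $o(1)$ as $R\to\infty$, keeping the imaginary residue at $\infty$ as an exact symbolic quantity, setting the real part to zero, and simplifying; uniqueness of the real solution $x\in(0,\pi/2)$ of (\ref{asympt en l'inf}) would follow from monotonicity of the resulting one-variable function, checked by differentiation.
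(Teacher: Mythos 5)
Your proposal rests on a misidentification of the set $\Gamma$. In (\ref{eq gamma}) the point $z$ occurs both as the endpoint of integration and as a zero of the integrand: $\Gamma$ is a locus in the \emph{parameter} plane (the set of positions $a$ of the conjugate pair of zeros for which condition (\ref{cond necess}) holds for the pair $(1,a)$), not the critical graph of any fixed quadratic differential $-(t-z_{0})(t-\overline{z_{0}})(t^{2}-1)\,dt^{2}$. This is fatal for your analysis at infinity. Your dominant-balance argument --- leading term $\tfrac{1}{3}z^{3}$, hence $\cos(3x)\to 0$, hence $x\to\pi/6$ plus lower-order corrections --- is the correct statement that \emph{trajectories of a fixed polynomial quadratic differential} escape along the critical directions $D_{k}$; but $\Gamma$ is not such a trajectory, and the lemma's answer $x\simeq 0.898$ is neither $\pi/6\simeq 0.524$ nor $\pi/2$, so your route leads to a false limit. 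The correct mechanism, used in the paper, is that after writing $z=re^{ix}$ and rescaling the integration variable by $s=tre^{ix}$, the zeros $z,\overline{z}$ of the integrand scale together with the endpoint, so the whole integral equals $r^{3}$ times an explicit function of $x$ and $1/r^{2}$; letting $r\to\infty$ gives the genuinely transcendental condition $\Re\bigl(e^{3ix}\int_{0}^{1}t\sqrt{(t-1)(t-e^{-2ix})}\,dt\bigr)=0$, which after an affine change of variable becomes $\Im\int_{i\cot x}^{1}(u-i\cot x)\sqrt{u^{2}-1}\,du=0$ and evaluates in closed form (via $\int\sqrt{v^{2}+1}\,dv$, which is where $\operatorname{arcsinh}$ and hence $\sinh$ enter) to exactly (\ref{asympt en l'inf}). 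No ``competition between the cubic and quadratic terms'' occurs. You yourself flag that you cannot see how (\ref{asympt en l'inf}) would emerge; that is because it cannot emerge from the fixed-differential picture.

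The same misreading affects your argument at $z=1$, although there the numerical answer happens to agree: you invoke the local structure of the three trajectories of $\varpi$ at its simple zero $z=1$ (angles $2\pi/3$ apart, one of them along $[-1,1]$), which again concerns a fixed differential, whereas the paper expands $\int_{1}^{z}\sqrt{(t-z)(t-\overline{z})(t^{2}-1)}\,dt$ as the \emph{moving} zero $z$ tends to $1$ and reads off $\Re\bigl((z-1)^{3/2}\bigr)=0$ from the leading term, the prefactor being real and positive because $(1-z)(1-\overline{z})=|1-z|^{2}$. (If you redo this computation, be careful: on the path from $1$ to $z$ the factor $t-z$ is \emph{not} uniformly approximated by $1-z$, since $z$ is itself a branch point at distance $|z-1|$ from $1$; a clean treatment parametrizes $t=1+s(z-1)$ and keeps all three vanishing factors.) Finally, the uniqueness part of your plan (monotonicity in $x$ of the resulting one-variable function) does match the paper's last step, but the derivation of the equation itself --- the heart of the lemma --- is precisely what your proposal leaves open, so as it stands the argument is both incomplete and, at infinity, aimed at the wrong limit.
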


\begin{remark}
Numeric calculus gives the approximation $x\simeq$ $0.898\simeq\frac{2\pi}%
{7}.$
\end{remark}

\begin{lemma}
\label{curve gamma}The set $\Gamma$ is formed by $5$ Jordan arcs :

\begin{itemize}
\item two curves $\Gamma_{1}^{\pm}$ emerging from $z=1,$ and diverging
respectively to infinity in $%
\mathbb{C}
_{\pm};$

\item two curves $\Gamma_{-1}^{\pm}$ emerging from $z=-1,$ and diverging
respectively to infinity in $%
\mathbb{C}
_{\pm}.$
\end{itemize}
\end{lemma}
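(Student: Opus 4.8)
The plan is to treat $\Gamma$ as the zero set of the real--analytic function
\[
F(z)=\Re\int_{0}^{z}\sqrt{p_{z}(t)}\,dt,\qquad p_{z}(t):=(t-z)(t-\overline{z})(t^{2}-1),
\]
the integral taken along the segment $[0,z]$, the determination of the square root being irrelevant as noted after (\ref{eq gamma}). First I would record the structural consequences: $F$ is real--analytic away from $z=\pm1$, so $\Gamma=F^{-1}(0)$ is a locally finite union of real--analytic arcs meeting only at the isolated zeros of $\nabla F$; and since $p$ is real, $F(\overline{z})=F(z)$ and $F(-\overline{z})=F(z)$, so $\Gamma$ is symmetric in both axes, and it suffices to describe $\Gamma\cap\overline{\mathbb{C}_{+}^{+}}$ and transport the result by $z\mapsto\overline{z}$ and $z\mapsto-\overline{z}$.

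Next I would pin down $\Gamma$ on the boundary of the first quadrant and along $[-1,1]$. On $(0,1)$ the radicand $(t^{2}-1)\lvert t-z\rvert^{2}$ is negative, so $\sqrt{p_{z}}$ is purely imaginary there, while on $(1,\infty)$ it is real and nonzero; hence $\Gamma\cap\mathbb{R}_{\ge 0}=[0,1]$. On $i\mathbb{R}_{\ge 0}$, where $p_{iy_{0}}(t)=(t^{2}-1)(t^{2}+y_{0}^{2})$, the substitution $t=is$ gives $\Re\int_{0}^{iy_{0}}\sqrt{p_{iy_{0}}(t)}\,dt\neq 0$ for $y_{0}>0$, so $\Gamma\cap i\mathbb{R}_{\ge 0}=\{0\}$. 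For $x_{0}\in(-1,1)$, the evenness of $F$ in $y$ together with $F(x,0)=0$ give $F(x,y)=y^{2}h(x,y)$ with $h$ real--analytic, and a leading--order computation yields $h(x_{0},0)=-\frac{\pi}{4}\sqrt{1-x_{0}^{2}}\neq 0$; so near every interior point of $[-1,1]$ (including $0$) the set $\Gamma$ is exactly the segment. Thus $[-1,1]$ is a Jordan arc of $\Gamma$ that meets the rest of $\Gamma$ at most at $\pm1$, where $p_{z}$ degenerates to a triple zero and the local model for $F$ carries a fractional power; Lemma~\ref{asymptotic gamma} then shows that exactly one branch of $\Gamma$ leaves $z=1$ into the open first quadrant, tangent to $e^{i\pi/3}$ (and, by symmetry, one into the fourth quadrant and two at $z=-1$).

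It then remains to show that $\Gamma\cap\mathbb{C}_{+}^{+}$ is a single Jordan arc joining $z=1$ to $\infty$; this is the heart of the proof and, I expect, the main obstacle. I would attack it by analysing, for each fixed $x>0$, the one--variable function $y\mapsto F(x,y)$ on $(0,\infty)$ and counting its zeros. For $x\in(0,1)$ one has $F(x,0)=0$, $\partial_{y}F<0$ near $y=0$, and $F(x,y)\sim -xy^{2}\to-\infty$; one must show there is no further zero, so that no point of $\Gamma$ lies over $(0,1)$. For $x>1$ one has $F(x,0)=\int_{1}^{x}(x-t)\sqrt{t^{2}-1}\,dt>0$ and again $F(x,y)\to-\infty$; one must show the sign change occurs exactly once, producing a single value $y=\varphi(x)$. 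Granting these, $\Gamma\cap\mathbb{C}_{+}^{+}$ is the graph of $\varphi$ over $(1,\infty)$, hence a simple arc containing no closed loop and no second component, which by Lemma~\ref{asymptotic gamma} tends to $z=1$ tangentially to $e^{i\pi/3}$ as $x\to1^{+}$ and escapes to $\infty$ along the critical direction found there as $x\to\infty$. The delicate point is precisely the uniqueness of the zero of $F(x,\cdot)$; an equivalent and perhaps cleaner route, should the direct sign analysis prove awkward, is to show that the conjugate quantity $\Im\int_{1}^{z}\sqrt{p_{z}}$ is strictly monotone along $\Gamma\cap\mathbb{C}_{+}^{+}$, i.e.\ that the Jacobian $\lvert\partial_{z}\Phi\rvert^{2}-\lvert\partial_{\overline{z}}\Phi\rvert^{2}$ with $\Phi(z)=\int_{1}^{z}\sqrt{p_{z}}$ keeps a fixed sign there, which forbids critical points of $F$ on $\Gamma$ and forces each arc of $\Gamma$ in the quadrant to run monotonically from $z=1$ to $\infty$.

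Finally I would assemble the pieces: writing $\Gamma_{1}^{+}$ for the arc just obtained and putting $\Gamma_{1}^{-}=\overline{\Gamma_{1}^{+}}$, $\Gamma_{-1}^{+}=-\overline{\Gamma_{1}^{+}}$ and $\Gamma_{-1}^{-}=-\Gamma_{1}^{+}$, these four arcs lie in the four distinct open quadrants and abut $\pm1$ from distinct sides, hence are pairwise distinct; together with $[-1,1]$ they exhaust $\Gamma$, which is precisely the decomposition into five Jordan arcs asserted in Lemma~\ref{curve gamma}.
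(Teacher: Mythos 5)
Your overall architecture matches the paper's: reduce to the closed first quadrant by the two reflection symmetries, identify $\Gamma$ on the real and imaginary axes, isolate $[-1,1]$ as one arc, and use Lemma~\ref{asymptotic gamma} for the local behaviour at $z=1$ and the asymptotic direction at infinity. But the core of the lemma --- that $\Gamma\cap\mathbb{C}_{+}^{+}$ is a \emph{single} Jordan arc --- is exactly the step you leave open. You reduce it to the claim that $y\mapsto F(x,y)$ has no zero on $(0,\infty)$ when $0<x<1$ and exactly one when $x>1$, and then write ``granting these''; nothing in your text establishes either count, and the alternative you sketch (a fixed sign for the Jacobian of $\Phi$ along $\Gamma$) is likewise only proposed, not proved. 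As it stands the proposal is a correct skeleton with the load-bearing analytic estimate missing.

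The paper closes this gap by slicing in the other direction. It first checks, by an explicit argument-of-the-integrand computation, that $\Gamma$ meets the half-line $\{\Re z=1,\ \Im z\geq 0\}$ only at $z=1$, which legitimises restricting to $\Pi=\{x>1,\ y>0\}$. There it splits the defining integral as $F+G$, with $F$ the contribution of the real segment up to $x$ and $G$ that of the vertical segment from $x$ to $x+iy$, and proves $\partial F/\partial x>0$ and $\partial G/\partial x>0$ separately: the first because the integrand is manifestly positive, the second by showing $\Im\bigl(z_{t}/\sqrt{z_{t}^{2}-1}\bigr)\leq 0$ for $z_{t}=x+ity$ via elementary bounds on arguments. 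Strict monotonicity in $x$ at fixed $y$ gives at most one point of $\Gamma$ on each horizontal slice of $\Pi$, and the implicit function theorem then yields a single regular arc $x=g(y)$. If you want to complete your version, you should either actually prove the one-zero count for $y\mapsto F(x,y)$ --- which looks harder, since the sign of the $y$-derivative is not transparent --- or switch, as the paper does, to monotonicity in $x$, where the computation goes through by inspection of signs.
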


From Lemma \ref{curve gamma}, $\Gamma$ splits $%
\mathbb{C}
$ into 4 connected domains :

\begin{itemize}
\item $\Omega_{1}$ limited by $\Gamma_{1}^{\pm}$ and containing $z=2;$

\item $\Omega_{2}$ limited by $\Gamma_{-1}^{\pm}$ and containing $z=-2;$

\item $\Omega_{+}$ $\subset%
\mathbb{C}
_{+},$ limited by $\left[  -1,1\right]  $ and $\Gamma_{\pm1}^{+};$

\item $\Omega_{-}$ $\subset%
\mathbb{C}
_{-},$ limited by $\left[  -1,1\right]  $ and $\Gamma_{\pm1}^{-}.$ See Figure
\ref{2}.
\end{itemize}

\begin{figure}[th]
\centering\includegraphics[height=1.8in,width=2.8in]{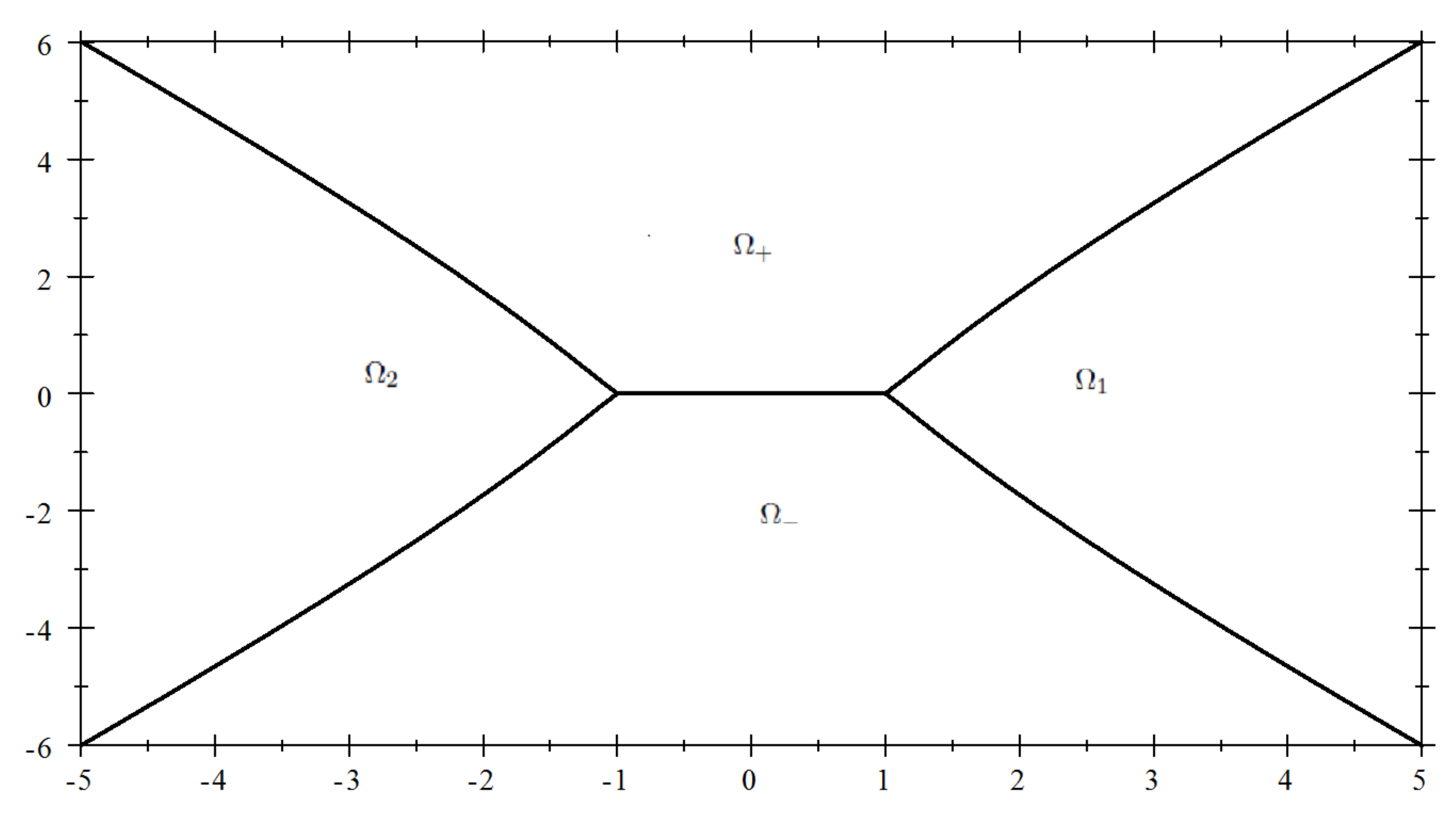}\caption{Approximate
plot of the curve $\Gamma$}%
\label{2}%
\end{figure}

Then main result of this paper is the following :

\begin{proposition}
\label{critical graph} For any complex number $a\in%
\mathbb{C}
^{+},$ the segment $\left[  -1,1\right]  $ is a short trajectory of the
quadratic differential $\varpi.$ Moreover,

\begin{itemize}
\item[(i)] For $a\in\Omega_{1}\cup\Omega_{2},$ $\varpi$ has an unbrowken short
trajectory connecting $a$ and $\overline{a}$. $\Gamma_{p}$ splits $\overline{%
\mathbb{C}
}$ into six half-plane domains and one strip domain; see Figure
\ref{3}.

\item[(ii)] For $a$ $\in\Gamma_{\pm1},$ $\varpi$ has two short trajectories
connecting $\pm1$ to $a$ and to $\overline{a}.$ $\Gamma_{p}$ splits
$\overline{%
\mathbb{C}
}$ into six half-plane domains; see Figure \ref{4}.

\item[(iii)] For $a$ $\in\Omega_{+}\cup\Omega_{-},$ there is no short
trajectory connecting $a$ and $\overline{a}$. $\Gamma_{p}$ splits $\overline{%
\mathbb{C}
}$ into six half-plane domains and two strip domains; see Figure
\ref{5}.
\end{itemize}
\end{proposition}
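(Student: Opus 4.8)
The plan is to determine, in each of the three regimes, the number $s$ of short trajectories of $\varpi$; the topology of $\Gamma_{p}$ and the count of half-plane and strip domains then follow by Euler-characteristic bookkeeping on the sphere. Since $p$ is real, $\varpi$ is invariant under $z\mapsto\overline{z}$, so $\Gamma_{p}$ is symmetric about $\mathbb{R}$; together with the reflection $a\mapsto-\overline{a}$, which sends $\varpi$ to its mirror image in the imaginary axis and interchanges $\Omega_{1}$ with $\Omega_{2}$ (and $\Gamma_{1}^{\pm}$ with $\Gamma_{-1}^{\pm}$), this lets me assume in case (i) that $a\in\Omega_{1}$, in case (ii) that $a\in\Gamma_{1}^{+}$, and in case (iii) that $a\in\Omega_{+}$. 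By Lemma \ref{conse teich1}(i), $[-1,1]$ is always short, and by Lemma \ref{conse teich1}(v) there are at most three short trajectories. A short trajectory joins two \emph{distinct} simple zeros of $p$: a loop at a simple zero, or two distinct trajectories both joining $-1$ and $1$, would bound a $\varpi$-polygon whose left-hand side in (\ref{Teich equality}) is $<2\le2+m$, a contradiction. Hence $[-1,1]$ is the unique short trajectory between $\pm1$, and every further short trajectory joins $a$ or $\overline{a}$ to one of $\pm1$, or joins $a$ to $\overline{a}$.

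Now I exploit the period conditions together with a continuity argument. As $p<0$ on $(-1,1)$, $\sqrt{p}$ is purely imaginary there, so $\Re\int_{0}^{1}\sqrt{p}\,dt=0$; thus the relation defining $\Gamma$ in (\ref{eq gamma}) is equivalent to $\Re\int_{1}^{a}\sqrt{p}\,dt=0$, and (using $\Re\int_{-1}^{1}\sqrt{p}\,dt=0$ and conjugation) to the necessary condition (\ref{cond necess}) for \emph{each} mixed pair $(\pm1,a)$, $(\pm1,\overline{a})$; by contrast (\ref{cond necess}) holds for $(a,\overline{a})$ for \emph{every} $a$, by Lemma \ref{residue}. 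Consequently, for $a\notin\Gamma$ no mixed pair admits a short trajectory, so $s\in\{1,2\}$ and the only possible extra short trajectory joins $a$ and $\overline{a}$. Since the period $\Re\int_{a}^{\overline{a}}\sqrt{p}\,dt$ vanishes identically, that connection is a stable saddle connection, so the set of $a\in\mathbb{C}_{+}\setminus\Gamma$ for which it exists is open and closed, hence a union of connected components of $\mathbb{C}_{+}\setminus\Gamma$; these components are $\Omega_{1}\cap\mathbb{C}_{+}$, $\Omega_{2}\cap\mathbb{C}_{+}$, and $\Omega_{+}$. I test one point of each by the local model at a real point $x_{*}$: for $a=x_{*}+i\varepsilon$ one has $p(z)\approx(x_{*}^{2}-1)\big((z-x_{*})^{2}+\varepsilon^{2}\big)$ near $x_{*}$, so along $z=x_{*}+it$, $|t|<\varepsilon$, $\varpi\approx(x_{*}^{2}-1)(\varepsilon^{2}-t^{2})\,dt^{2}$, which is a \emph{horizontal} short trajectory joining $a$ and $\overline{a}$ when $|x_{*}|>1$ and a \emph{vertical} one when $|x_{*}|<1$. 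Taking $x_{*}=2$ (in $\Omega_{1}$) gives a short trajectory $a\leftrightarrow\overline{a}$ throughout $\Omega_{1}$, hence by the reflection throughout $\Omega_{2}$, so $s=2$ there; taking $x_{*}=0$ (on $\partial\Omega_{+}$) shows there is none just above $(-1,1)$, hence none throughout $\Omega_{+}$ and $\Omega_{-}$, so $s=1$ there. For $a\in\Omega_{1}\cap\mathbb{C}_{+}$ the crossing point $x_{0}$ of the $a\leftrightarrow\overline{a}$ trajectory on $\mathbb{R}$ lies in $(1,+\infty)$, and from $\Re\int_{1}^{a}\sqrt{p}\,dt=\Re\int_{1}^{x_{0}}\sqrt{p}\,dt$ with $x_{0}\mapsto\Re\int_{1}^{x_{0}}\sqrt{p}\,dt$ positive and increasing on $(1,\infty)$, letting $a\to\Gamma_{1}^{+}$ forces $x_{0}\to1$; thus on $\Gamma_{1}^{+}$ the trajectory degenerates through the critical point $1$ into the two short trajectories $a\leftrightarrow1$ and $\overline{a}\leftrightarrow1$, so together with $[-1,1]$ we get $s=3$, the maximum (no other mixed short trajectory can occur, as that would give $s\ge4$).

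For the domain count, $\Gamma_{p}$ is a connected graph on $\overline{\mathbb{C}}$ whose vertices are the four simple zeros (each of degree $3$) together with $\infty$, carrying $s$ short edges and $12-2s$ infinite critical trajectory arcs, so Euler's formula gives $F=2-5+(12-s)=9-s$ faces. By Jenkins' Three-pole Theorem there are no recurrent trajectories, and a ring domain is impossible since applying (\ref{Teich equality}) to the $\varpi$-polygon cut off by one of its closed trajectories gives $0=2+m$; hence every face is a half-plane domain or a strip domain. As $\infty$ is a pole of order $8$, the classical local structure there produces exactly $6$ half-plane domains (one in each of the $6$ sectors between consecutive critical directions $D_{0},\dots,D_{5}$), so there are $T=F-6=3-s$ strip domains. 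Substituting $s=2,3,1$ gives, respectively, six half-planes and one strip, six half-planes, and six half-planes and two strips; placing these domains compatibly with Lemma \ref{conse teich1}(ii)--(iv) and the asymptotics of Lemma \ref{asymptotic gamma} reproduces Figures \ref{3}, \ref{4}, \ref{5}.

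The steps I expect to require the most care are the persistence claim (that the $a\leftrightarrow\overline{a}$ saddle connection varies continuously and disappears only on $\Gamma$), and, in case (iii), the fact that (\ref{cond necess}) is satisfied for $(a,\overline{a})$ for \emph{all} $a$ by Lemma \ref{residue}, so non-existence of the short trajectory there genuinely needs the local-model/continuity argument rather than a period computation; making the passage from the local pictures (near the zeros, near $\infty$, at the test points $x_{*}$) to the global graph rigorous is where the Teichmüller lemma and the absence of recurrent trajectories are indispensable.
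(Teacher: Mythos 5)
Your strategy is sound and lands close to the paper's own argument, which as printed is fragmentary (part (i) trails off, part (iii) is missing). Both proofs rest on the same pillars: the period curve $\Gamma$ of (\ref{eq gamma}) as the exact locus where a mixed connection $\pm1\leftrightarrow a$ can exist, a continuity/deformation argument in the parameter $a$ (your open--closed argument on components of $\mathbb{C}_{+}\setminus\Gamma$ versus the paper's deformation along $a_{t}=x+ity$ and the set $N_{x}$), and Lemma \ref{teich lemma} to forbid unwanted configurations. Two things you do differently are worth keeping: the Euler-characteristic count $F=9-s$ combined with the exclusion of ring and recurrent domains, which yields the half-plane/strip counts uniformly in all three cases instead of reading them off figures; and your treatment of (ii) by degeneration from $\Omega_{1}$ (forcing the crossing point $x_{0}\to1$ via monotonicity of $x_{0}\mapsto\int_{1}^{x_{0}}\sqrt{p}$), whereas the paper argues directly on $\Gamma$ by joining two allegedly divergent critical trajectories with an arc of an orthogonal trajectory and showing the resulting period is nonzero. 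Both (ii)-arguments work; the paper's does not presuppose the analysis on $\Omega_{1}$.

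Two steps in your write-up are genuine gaps, not stylistic shortcuts. First, the closedness half of the open--closed claim: a degenerating family of $a\leftrightarrow\overline{a}$ connections can a priori break either through another finite critical point (which forces $a\in\Gamma$, as you say) or by escaping to infinity, and the second alternative must be excluded, e.g.\ by bounding the $\varpi$-length $\left\vert \Im\int_{a}^{\overline{a}}\sqrt{p}\,dt\right\vert $ of the connection on compact subsets of a component. Second, and more seriously, the test point at $x_{*}=0$: showing that the segment $\left[ \overline{a},a\right] $ is a \emph{vertical} arc does not show that no short trajectory joins $a$ and $\overline{a}$, precisely because (as you note) condition (\ref{cond necess}) holds for this pair for every $a$ via the path through $z=1$; a horizontal connection in a different homotopy class, winding around $z=1$ or $z=-1$, is not touched by the local model. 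To close this, observe that such a connection $\gamma$ can meet the trajectory $\left[ -1,1\right] $ only at $\pm1$ (excluded since $a\notin\Gamma$), so it crosses $\mathbb{R}$ in $(1,\infty)$ or $(-\infty,-1)$; together with the vertical arc $\left[ \overline{a},a\right] $ it then bounds a $\varpi$-polygon with two simple-zero vertices containing one zero of $p$, and (\ref{Teich equality}) reads $\sum\left( 1-\frac{3t_{j}}{2\pi}\right) =3$ with only two summands each strictly less than $1$ --- a contradiction. With these two repairs (and a similar argument locating the crossing point $x_{0}$ in $(1,\infty)$ for $a\in\Omega_{1}$), your proof is complete and, unlike the paper's, covers case (iii).
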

\begin{figure}[th]
\centering\includegraphics[height=1.8in,width=2.8in]{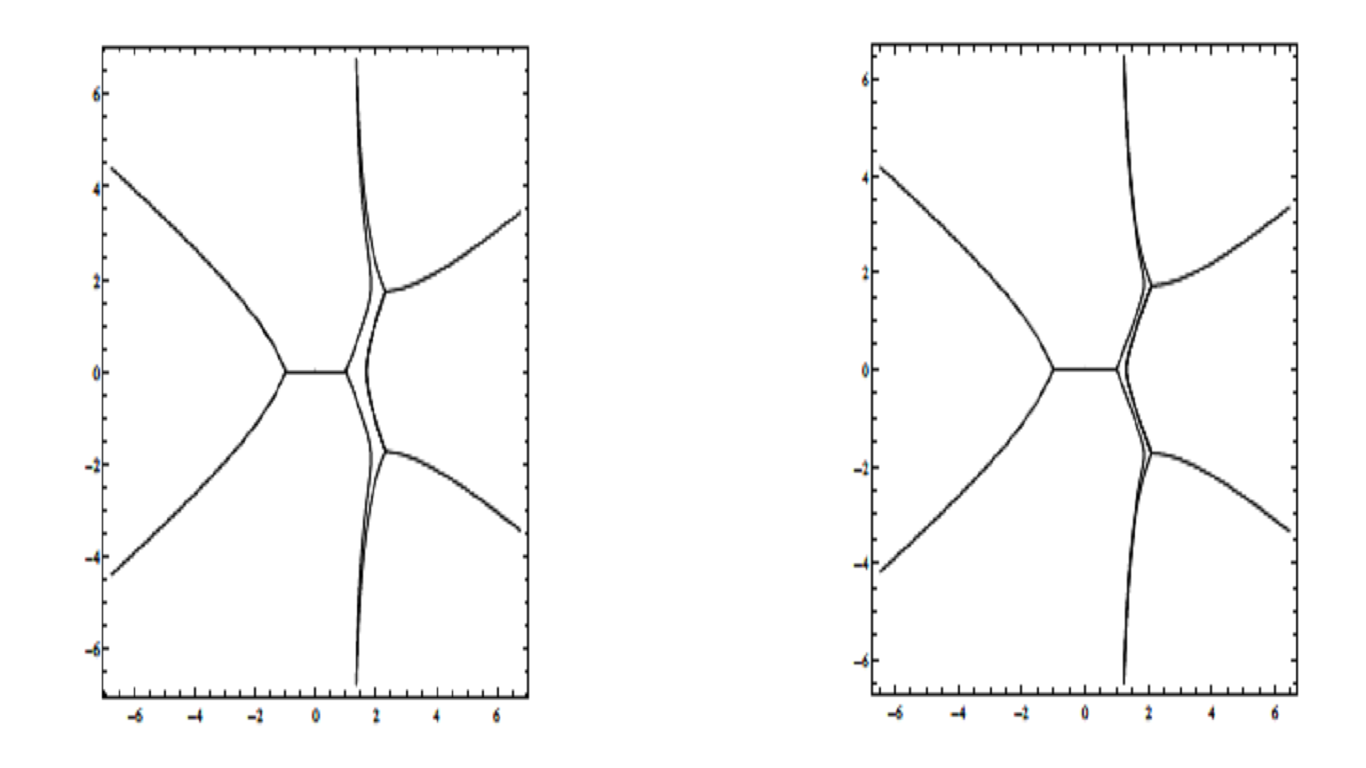}\caption{Structure of the trajectories near a simple zero (left),double zero (center), and pole of order 8 (right)}%
\label{3}%
\end{figure}
\begin{figure}[th]
\centering\includegraphics[height=1.8in,width=2.8in]{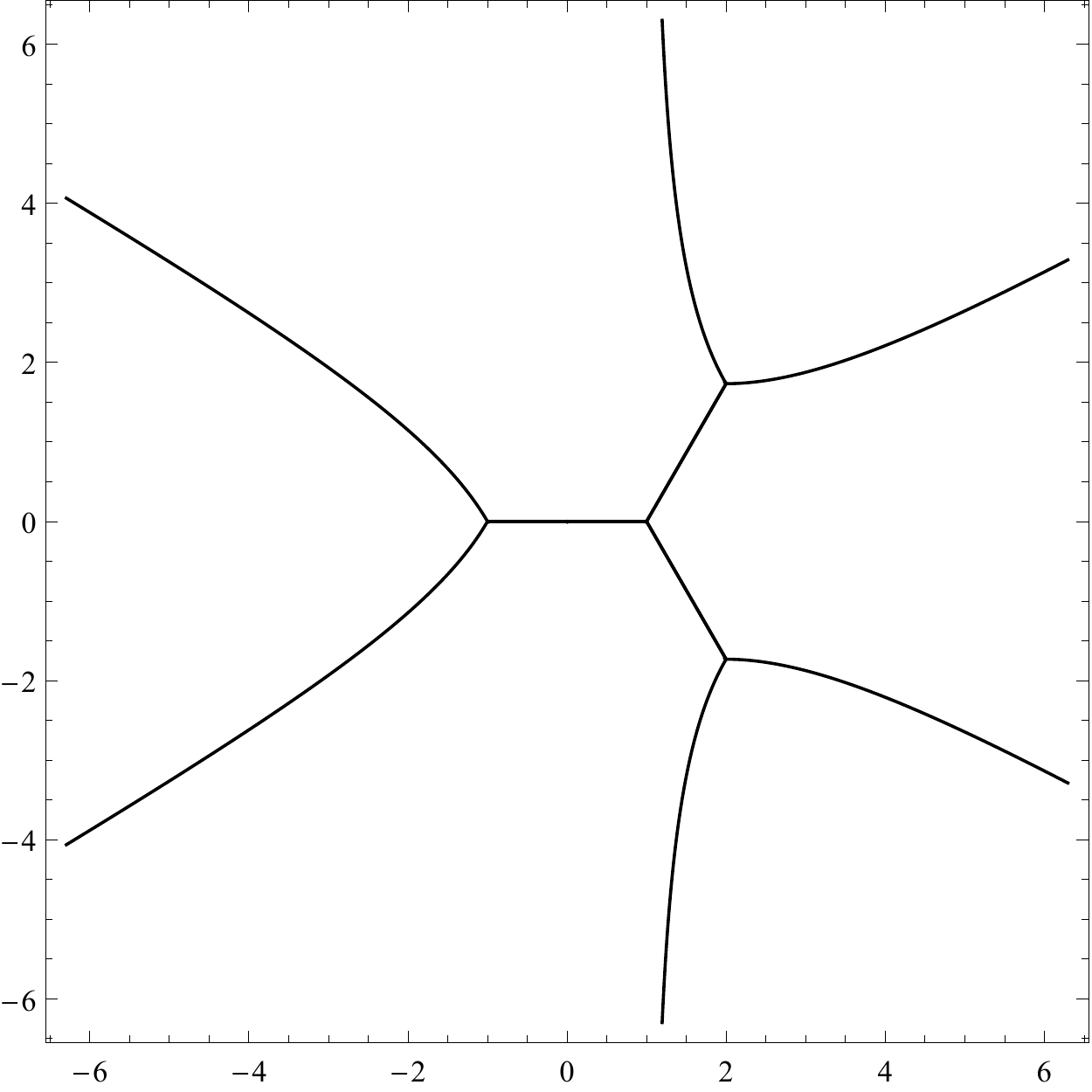}\caption{(center), and pole of order 8 (right)}%
\label{4}%
\end{figure}

\begin{figure}[th]
\centering\includegraphics[height=1.8in,width=2.8in]{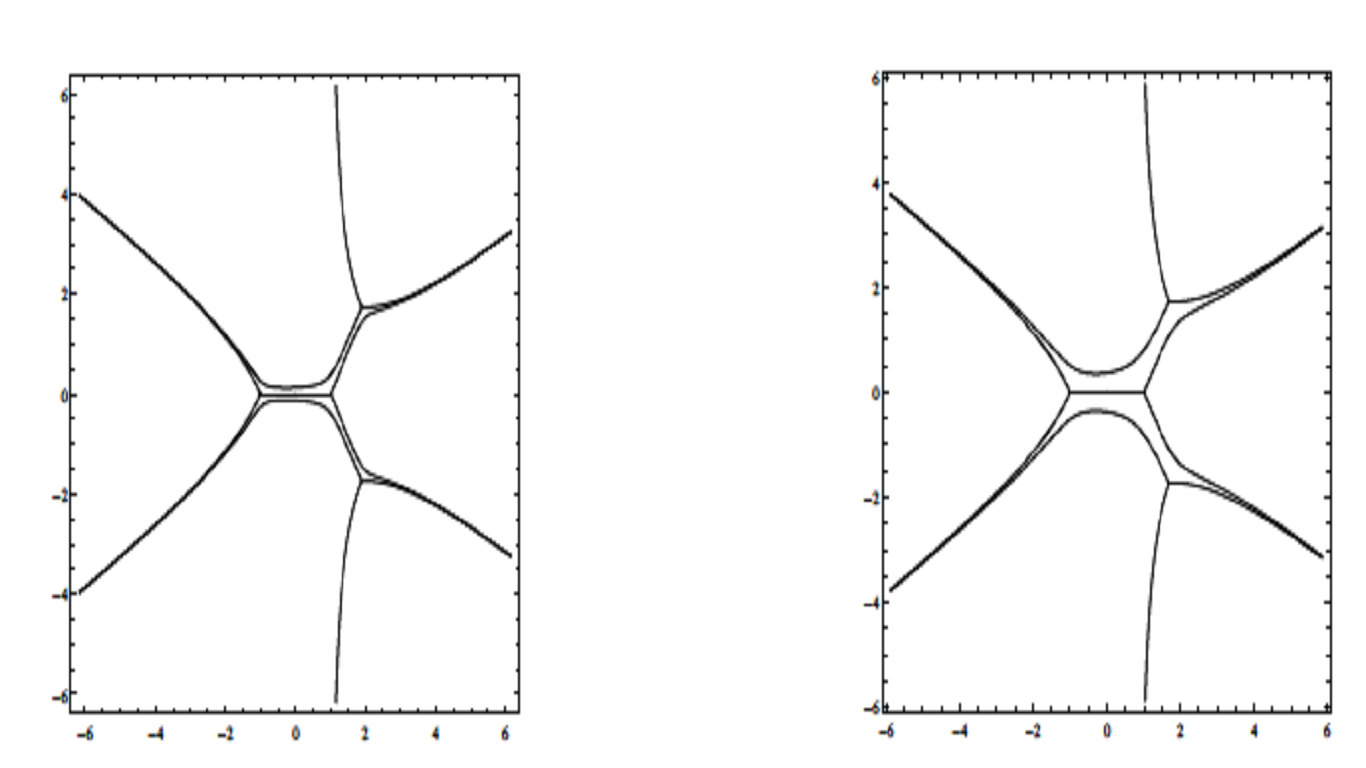}\caption{Structure of the trajectories near a simple zero (left),double zero (center), and pole of order 8 (right)}%
\label{5}%
\end{figure}

\bigskip\bigskip

\begin{remark}

\begin{enumerate}
\item \bigskip If $p\in%
\mathbb{C}
\left[  X\right]  $ and $\varpi$ has no short trajectory, then $\Gamma_{p}$
splits $\overline{%
\mathbb{C}
}$ into six half-plane domains and three strip domains. See Figure \ref{10}.

\item If $p\in%
\mathbb{R}
\left[  X\right]  ,$ then the critical graph $\Gamma_{p}$ is symmetric with
respect to the real axis. It is obvious that when $p=$ $\prod_{i=1}^{4}\left(
z-a_{i}\right)  $ with simple real zeros : $a_{1}<a_{2}<a_{3}<a_{4},$ then the
segments $\left[  a_{1},a_{2}\right]  $ and $\left[  a_{3},a_{4}\right]  $ are
two short trajectories of $\varpi_{p}$. See Figure \ref{6}.
\end{enumerate}
\end{remark}

\begin{figure}[th]
\centering\includegraphics[height=1.8in,width=2.8in]{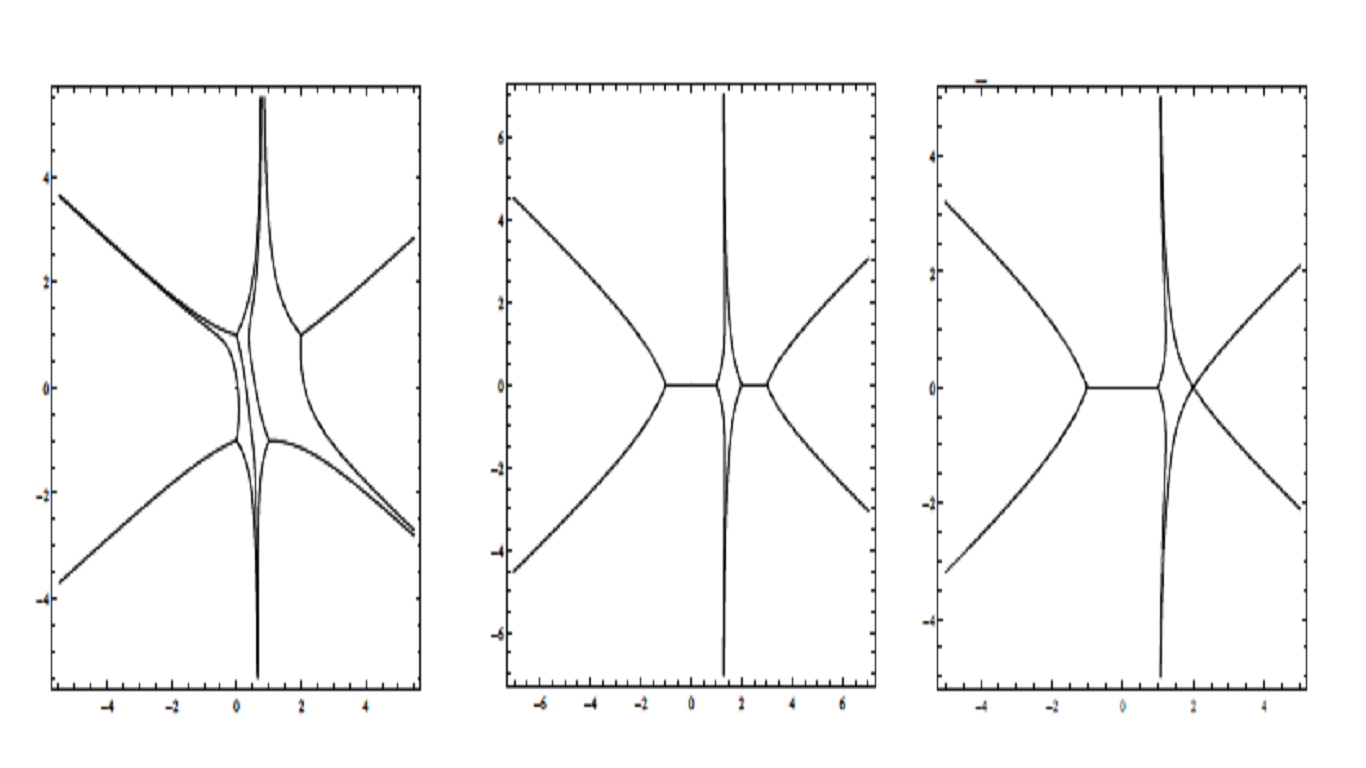}\caption{Structure of the trajectories near a simple zero (left),double zero (center), and pole of order 8 (right)}%
\label{6}%
\end{figure}\bigskip

\subsection{\bigskip The algebraic equation \label{eqal}}

The Cauchy transform $\mathcal{C}_{\nu}$ of a compactly supported Borelian
complex measure $\nu$ is the analytic function defined in $%
\mathbb{C}
\setminus$\emph{supp}$\left(  \nu\right)  $ by :
\begin{equation}
\mathcal{C}_{\nu}\left(  z\right)  =\int_{%
\mathbb{C}
}\frac{d\nu\left(  t\right)  }{z-t}. \label{cauchy}%
\end{equation}
It satisfies:%
\begin{equation}
\mathcal{C}_{\nu}\left(  z\right)  =\frac{\nu\left(
\mathbb{C}
\right)  }{z}+\mathcal{\allowbreak O}\left(  z^{-2}\right)  ,z\rightarrow
\infty, \label{cauchy cond 1}%
\end{equation}
and the inversion formula (which should be understood in the distributions
sense):%
\[
\nu=\frac{1}{\pi}\frac{\partial\mathcal{C}_{\nu}}{\partial\overline{z}}.\text{
}%
\]

In particular, the normalized root-counting measure $\nu_{n}=\nu(P_{n})$ of a
complex polynomial $P_{n}$ of a complex polynomial $P_{n}$ of degree $n$ is
defined for any compact set $K$ in $%
\mathbb{C}
$ by :
\[
\int_{K}d\nu_{n}=\frac{\text{\emph{number of zeros} \emph{of} }P_{n}\text{
\emph{in} }K}{n};
\]
the Cauchy transform of $\nu_{n}$ is :
\[
\mathcal{C}_{\nu_{n}}(z)=\int_{%
\mathbb{C}
}\frac{d\nu_{n}\left(  t\right)  }{z-t}=\frac{P_{n}^{^{\prime}}(z)}{nP_{n}%
(z)};P_{n}(z)\neq0.
\]

Let us consider the following algebraic equation :%
\begin{equation}
\mathcal{C}^{2}\left(  z\right)  +r\left(  z\right)  \mathcal{C}\left(
z\right)  +s\left(  z\right)  =0. \label{eq alg}%
\end{equation}
In this subsection, we give the connection between investigate the existence
of a compactly supported Borelian signed measure $\nu,$ whose Cauchy transform
$\mathcal{C}_{\nu}$ satisfies almost everywhere in the complex plane $%
\mathbb{C}
$ where $r$ and $s$ are two polynomials with degree $2$ and at most $1;$
\begin{align*}
r\left(  z\right)   &  =az^{2}+bz+c;a\in%
\mathbb{C}
\setminus\left\{  0\right\}  ,b,c\in%
\mathbb{C}%
\\
s\left(  z\right)   &  =ez+f;e,f\in%
\mathbb{C}
.
\end{align*}
The measure $\nu$ (if exists) is called \emph{real mother-body measure} of the
equation (\ref{eq alg}).

The following proposition gives the connection between ..

\begin{proposition}
\label{connection}If equation (\ref{eq alg}) admits a real mother-body measure
$\nu$, then:

\begin{itemize}
\item $\frac{e}{a}\in%
\mathbb{R}
,$

\item any connected curve in the support of $\nu$ coincides with a short
trajectory of the quadratic differential
\begin{equation}
\varpi=-Q\left(  z\right)  dz^{2}, \label{1qd}%
\end{equation}
where $Q\left(  z\right)  =r^{2}\left(  z\right)  -4s\left(  z\right)  $ is
the discriminant of the quadratic equation (\ref{eq alg}).
\end{itemize}
\end{proposition}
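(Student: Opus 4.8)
The plan is to read off the relation $e/a\in\mathbb{R}$ from the behaviour of $\mathcal{C}_\nu$ at infinity, and to obtain the trajectory statement from the boundary (jump) values of $\mathcal{C}_\nu$ on $\mathrm{supp}(\nu)$ combined with the algebraic equation (\ref{eq alg}) itself.

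\textbf{The residue condition.} On the unbounded component of $\mathbb{C}\setminus\mathrm{supp}(\nu)$ the function $\mathcal{C}=\mathcal{C}_\nu$ is analytic and solves (\ref{eq alg}), so $\mathcal{C}=\tfrac12\big(-r(z)+\sqrt{Q(z)}\big)$ where $Q=r^2-4s$ and $\sqrt{Q}$ is the branch with $\sqrt{Q}\sim az^2$ at $\infty$ (this is the branch for which $\mathcal{C}\to0$, as (\ref{cauchy cond 1}) requires). Since $\deg s\le1$,
\[
\sqrt{Q}=r\sqrt{1-\tfrac{4s}{r^2}}=r-\tfrac{2s}{r}+\mathcal{O}(z^{-2})=r(z)-\tfrac{2e}{a}\,z^{-1}+\mathcal{O}(z^{-2}),\qquad z\to\infty,
\]
hence $\mathcal{C}(z)=-\tfrac{e}{a}\,z^{-1}+\mathcal{O}(z^{-2})$. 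Comparing with (\ref{cauchy cond 1}) gives $\nu(\mathbb{C})=-e/a$; as $\nu$ is a real signed measure, $\nu(\mathbb{C})\in\mathbb{R}$, so $e/a\in\mathbb{R}$.

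\textbf{Boundary values on the support.} Let $\gamma$ be a connected curve contained in $\mathrm{supp}(\nu)$. On a smooth subarc, parametrised by arclength $z=z(s)$ with $|z'(s)|=1$, write $d\nu=\mu(s)\,ds$ with $\mu$ real. By the inversion formula $\nu=\tfrac1\pi\,\partial\mathcal{C}/\partial\bar z$ (equivalently, by Sokhotski--Plemelj), the nontangential boundary values $\mathcal{C}_{\pm}$ from the two sides of $\gamma$ satisfy $\mathcal{C}_{+}(z(s))-\mathcal{C}_{-}(z(s))=c\,\mu(s)\,\overline{z'(s)}$ with $c=-2\pi i$ (the precise constant and the labelling of the two sides play no role below); in particular $\mathcal{C}_{+}\neq\mathcal{C}_{-}$ a.e. on $\gamma$, for otherwise $\mu\equiv0$ there and that subarc would not meet $\mathrm{supp}(\nu)$. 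Since $\mathcal{C}_{+}$ and $\mathcal{C}_{-}$ both satisfy (\ref{eq alg}) a.e. on $\gamma$, they are the two distinct roots of $X^2+r(z)X+s(z)=0$; Vieta's formulas give $\mathcal{C}_{+}+\mathcal{C}_{-}=-r$ and $\mathcal{C}_{+}\mathcal{C}_{-}=s$, hence $(\mathcal{C}_{+}-\mathcal{C}_{-})^2=r^2-4s=Q$ on $\gamma$, so $\mathcal{C}_{+}-\mathcal{C}_{-}=\sqrt{Q(z(s))}$ for a suitable determination of the root. Equating the two expressions for the jump and multiplying by $z'(s)$,
\[
\sqrt{Q(z(s))}\,z'(s)=c\,\mu(s)\,\overline{z'(s)}\,z'(s)=c\,\mu(s)\in i\mathbb{R},
\]
because $\mu$ is real and $c\in i\mathbb{R}$. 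Thus $\Re\big(\sqrt{Q(z(s))}\,z'(s)\big)\equiv0$, i.e. $s\mapsto\Re\int^{z(s)}\sqrt{Q(t)}\,dt$ is constant, which by (\ref{eq traj}) is precisely the statement that $\gamma$ lies on a horizontal trajectory of $\varpi=-Q(z)\,dz^{2}$ (note $\deg Q=4$ since $a\neq0$, so $\varpi$ has no simple poles and its only infinite critical point is $\infty$).

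\textbf{The endpoints are finite critical points.} It remains to see that $\gamma$ is a \emph{short} trajectory, i.e. that its endpoints are zeros of $Q$. If an endpoint $z_0$ of $\gamma$ were not a zero of $Q$, then $\sqrt{Q}$ would be analytic and non-vanishing near $z_0$, so the two branches $\tfrac12(-r\pm\sqrt{Q})$ would be distinct single-valued analytic functions in a punctured neighbourhood of $z_0$; continuing $\mathcal{C}$ around $z_0$ would then force it to pass from the branch $\mathcal{C}_{+}$ to the branch $\mathcal{C}_{-}$ without crossing $\mathrm{supp}(\nu)$, contradicting the analyticity of $\mathcal{C}$ off the support and the maximality of $\gamma$. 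Hence each endpoint of $\gamma$ is a zero of $Q$, i.e. a finite critical point of $\varpi$; being a bounded trajectory arc joining two finite critical points, $\gamma$ is a short trajectory. The main obstacle is exactly this last local analysis at the ends of a connected piece of the support (and, relatedly, excluding closed-loop components of $\mathrm{supp}(\nu)$, which cannot occur for these $\varpi$); once it is in place, the residue computation and the Plemelj/Vieta manipulations above are essentially formal.
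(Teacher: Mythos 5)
Your proposal is correct and follows essentially the same route as the paper: the Laurent expansion of $\sqrt{Q}$ at infinity to identify $\nu\left(\mathbb{C}\right)$ as a real multiple of $e/a$, and the Sokhotski--Plemelj jump combined with the two roots of the quadratic (Vieta) to show $-Q\left(t\right)dt^{2}>0$ along the support. You additionally supply the endpoint analysis showing that each arc must terminate at a zero of $Q$ (a point the paper's proof leaves implicit, stopping at ``horizontal trajectory'' and citing the literature), and you correct a harmless factor of $2$ that the paper introduces by omitting the $\tfrac{1}{2}$ in the quadratic formula.
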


\section{Proofs}

\begin{proof}
[Proof of Lemma \ref{residue}]Since $p$ is a real polynomial, then
$\overline{\sqrt{p\left(  t\right)  }}=\sqrt{p\left(  \overline{t}\right)  },$
and get, after the change of variable $u=\overline{t}$ in second integral :
\begin{align*}
\Re\left(  \int_{\overline{z}}^{z}\sqrt{p\left(  t\right)  }dt\right)   &
=\Re\left(  \int_{1}^{z}\sqrt{p\left(  t\right)  }dt-\int_{1}^{\overline{z}%
}\sqrt{p\left(  t\right)  }dt\right) \\
&  =\Re\left(  \int_{1}^{z}\sqrt{p\left(  t\right)  }dt-\overline{\int_{1}%
^{z}\sqrt{p\left(  t\right)  }dt}\right) \\
&  =\Re\left(  2i\Im\left(  \int_{1}^{z}\sqrt{p\left(  t\right)  }dt\right)
\right) \\
&  =0.
\end{align*}
Let us give a necessary condition to get two short trajectories joining two
different pairs of zeros of $p$ in the general case. We write%
\[
p\left(  z\right)  =z^{4}+\alpha z^{3}+\beta z^{2}+\gamma z+\delta\in%
\mathbb{C}
\left[  X\right]  .
\]
and consider two disjoint oriented Jordan arcs $\gamma_{1}$ and $\gamma_{2}$
joining two different pairs zeros. We define the single-valued function
$\sqrt{p\left(  z\right)  }$ in $%
\mathbb{C}
\setminus\left(  \gamma_{1}\cup\gamma_{2}\right)  $ with condition
$\sqrt{p\left(  z\right)  }\thicksim z^{2},z\rightarrow\infty.$

From the Laurent expansion at $\infty$ of $\sqrt{p\left(  z\right)  }:$%
\[
\sqrt{p\left(  z\right)  }=z^{2}+\frac{\alpha}{2}z-\frac{\alpha^{2}-4\beta}%
{8}+\frac{\alpha^{3}-4\alpha\beta+8\gamma}{16}z^{-1}+\mathcal{\allowbreak
O}\left(  z^{-2}\right)  ,z\rightarrow\infty,
\]
we deduce the residue%
\[
-res_{\infty}\left(  \sqrt{p\left(  z\right)  }\right)  =\allowbreak\frac
{1}{16}\left(  \alpha^{3}-4\alpha\beta+8\gamma\right)  .
\]
For $s\in\gamma_{1}\cup\gamma_{2},$ we denote by $\left(  \sqrt{p\left(
s\right)  }\right)  _{+}$ and $\left(  \sqrt{p\left(  s\right)  }\right)
_{-}$the limits from the $+$ and $-$ sides, respectively. (As usual, the $+$
side of an oriented curve lies to the left and the $-$ side lies to the right,
if one traverses the curve according to its orientation.) Let%
\[
I=\int_{\gamma_{1}}\left(  \sqrt{p\left(  s\right)  }\right)  _{+}%
ds+\int_{\gamma_{2}}\left(  \sqrt{p\left(  s\right)  }\right)  _{+}ds.
\]
Since
\[
\left(  \sqrt{p\left(  s\right)  }\right)  _{+}=-\left(  \sqrt{p\left(
s\right)  }\right)  _{-},s\in\gamma_{1}\cup\gamma_{2},
\]
we have
\[
2I=\int_{\gamma_{1}\cup\gamma_{2}}\left[  \left(  \sqrt{p\left(  s\right)
}\right)  _{+}-\left(  \sqrt{p\left(  s\right)  }\right)  _{-}\right]
ds=\oint_{\Gamma}\sqrt{p\left(  z\right)  }dz,
\]
where $\Gamma$ is a closed contours encircling the $\gamma_{1}$ and
$\gamma_{2}$. After the contour deformation we pick up residue at $z=\infty.$
We get, for any choice of the square roots
\begin{align*}
I  &  =\frac{1}{2}\oint_{\Gamma}\sqrt{p\left(  z\right)  }dz=\pm i\pi
res_{\infty}\left(  \sqrt{p\left(  z\right)  }\right) \\
&  =\pm\frac{\pi i}{16}\left(  \alpha^{3}-4\alpha\beta+8\gamma\right)  .
\end{align*}
We conclude the necessary condition :
\[
\Im\left(  \alpha^{3}-4\alpha\beta+8\gamma\right)  =0.
\]

\end{proof}

\begin{proof}
[Proof of Lemma \ref{conse teich1}]

\begin{itemize}
\item[(i)] It is straightforward that for $s\in\left[  -1,1\right]  ,$ we have
$\Re\int_{-1}^{s}\sqrt{p\left(  t\right)  }dt=0.$

\item[(ii)] Suppose that $\gamma_{1}$ and $\gamma_{2}$ are two such
trajectories emanating from the same zero $z_{j}$, spacing with angle
$t\in\left\{  2\pi/3,4\pi/3\right\}  .$ Consider the $\varpi_{p}$-polygon with
edges $\gamma_{1}$ and $\gamma_{2},$ and vertices $z_{j},$ and infinity. The
right side of (\ref{Teich equality}) can take only the values $0,$ or $-1$
while the left side is at list $2$; a contradiction.

\item[(iii)] If a critical trajectory emanating from $z=1$ diverges to
$\infty$ in the direction $D_{2},$ then, also, a critical trajectory emanating
from $z=-1$ must diverge to $\infty$ in the direction $D_{2};$ these two
critical trajectories will form with the segment $\left[  -1,1\right]  $ and
$\infty$ an $\varpi_{p}$-polygon and, again, this violates Lemma
\ref{teich lemma}.

\item[(iv)] The result follows by combining (ii) and (iii), and using Lemma
\ref{teich lemma}.

\item[(v)] The case of four short trajectories can be discarded immediately by
Lemma \ref{teich lemma}.
\end{itemize}

It is an immediate consequence from \ref{teich lemma} that it cannot happen
that two short trajectories connect two zeros of a holomorphic quadratic
differential on the Riemann sphere.
\end{proof}

\begin{proof}
[Proof of Lemma \ref{curve gamma}]The first observation is that :%
\begin{equation}
\Gamma\cap\left\{  z\in%
\mathbb{C}
\mid\Re\left(  z\right)  =1,\Im\left(  z\right)  \geq0\right\}  =\left\{
1\right\}  ; \label{first remark}%
\end{equation}
indeed, if for some $y>0,z=1+iy\in\Gamma,$ then
\begin{align*}
0  &  =\Re\left(  \int_{1}^{1+iy}\sqrt{\left(  u-\left(  1+iy\right)  \right)
\left(  u-\left(  1-iy\right)  \right)  \left(  u^{2}-1\right)  }du\right) \\
&  =y^{\frac{3}{2}}\int_{0}^{1}\sqrt{ty\left(  1-t^{2}\right)  }\Re\left(
\sqrt{ty-2i}\right)  dt,u=1+ity,0\leq t\leq1.
\end{align*}
However, if we choose the argument in $\left[  0,2\pi\right[  ,$ then for any
$t\in\left]  0,1\right[  ,$ we have
\begin{align*}
\frac{3\pi}{2}  &  <\arg\left(  ty-2i\right)  <2\pi\Longrightarrow\frac{3\pi
}{4}<\arg\sqrt{ty-2i}<\pi\\
&  \Longrightarrow\sqrt{ty\left(  1-t^{2}\right)  }\Re\left(  \sqrt
{ty-2i}\right)  <0\\
&  \Longrightarrow\int_{0}^{1}\sqrt{ty\left(  1-t^{2}\right)  }\Re\left(
\sqrt{ty-2i}\right)  dt<0,
\end{align*}
which gives a contradiction.

In order to prove that $\Gamma$ is a curve, and since we know that $\left[
-1,1\right]  \subset\Gamma,$ with the observation (\ref{first remark}), it is
sufficient to prove the result in the set%
\[
\Pi=\left\{  \left(  x,y\right)  ;x>1,y>0\right\}  ,
\]
and the real functions $F$ and $G$ defined for $\left(  x,y\right)  $ in $\Pi$
by:
\begin{align*}
F\left(  x,y\right)   &  =\Re\left(  \int_{0}^{x}\sqrt{\left(  u-\left(
x+iy\right)  \right)  \left(  u-\left(  x-iy\right)  \right)  \left(
u^{2}-1\right)  }du\right) \\
&  =\int_{1}^{x}\sqrt{\left(  \left(  u-x\right)  ^{2}+y^{2}\right)  \left(
u^{2}-1\right)  }du,\\
G\left(  x,y\right)   &  =\Re\int_{x}^{x+iy}\sqrt{\left(  u-\left(
x+iy\right)  \right)  \left(  u-\left(  x-iy\right)  \right)  \left(
u^{2}-1\right)  }du.
\end{align*}
In other, the set $\Gamma$ is defined by :%
\[
\left\{  \left(  x,y\in\Pi\mid\left(  F+G\right)  \left(  x,y\right)  \right)
=0\right\}  .
\]
Observe that $\left(  F+G\right)  $ is differentiable in $\Pi,$ and $z=1$ is
the only singular point of $F+G.$

We have
\[
\frac{\partial F}{\partial x}\left(  x,y\right)  =\sqrt{y^{2}\left(
x^{2}-1\right)  }+\Re\left(  \int_{1}^{x}\frac{\left(  x-t\right)  \left(
t^{2}-1\right)  }{\sqrt{\left(  \left(  t-x\right)  ^{2}+y^{2}\right)  \left(
t^{2}-1\right)  }}dt\right)  >0.
\]
By the other hand, with the change of variable $u=x+ity,0\leq t\leq1,$ we get
:
\begin{align*}
\frac{\partial G}{\partial x}\left(  x,y\right)   &  =\frac{\partial}{\partial
x}\left[  \Re\int_{0}^{1}iy^{2}\sqrt{\left(  1-t^{2}\right)  \left(  \left(
x+ity\right)  ^{2}-1\right)  }dt\right] \\
&  =\Re\int_{0}^{1}iy^{2}\left(  x+ity\right)  \frac{1-t^{2}}{\sqrt{\left(
\left(  x+ity\right)  ^{2}-1\right)  \left(  1-t^{2}\right)  }}dt\\
&  =-y^{2}\int_{0}^{1}\sqrt{1-t^{2}}\Im\left(  \frac{z_{t}}{\sqrt{z_{t}^{2}%
-1}}\right)  dt,\text{ where }z_{t}=x+ity.
\end{align*}
We have for any $t\in\left[  0,1\right]  $ :
\begin{align*}
0  &  \leq\arg\left(  z_{t}\right)  <\frac{\pi}{2}\Longrightarrow0\leq
\arg\left(  z_{t}\right)  \leq\arg\left(  \sqrt{z_{t}^{2}-1}\right)  \leq
\frac{\pi}{2}\\
&  \Longrightarrow-\frac{\pi}{2}\leq\arg\left(  \frac{z_{t}}{\sqrt{z_{t}%
^{2}-1}}\right)  =\arg\left(  z_{t}\right)  -\arg\left(  \sqrt{z_{t}^{2}%
-1}\right)  \leq0\\
&  \Longrightarrow\sqrt{1-t^{2}}\Im\left(  \frac{z_{t}}{\sqrt{z_{t}^{2}-1}%
}\right)  \leq0\text{ (non identically vanishing).}%
\end{align*}
It follows that
\[
\frac{\partial G}{\partial x}\left(  x,y\right)  >0.
\]
We conclude that the set $\Gamma$ is a regular curve in $%
\mathbb{C}
$ by $\allowbreak$applying the Implicit Function Theorem to the function $F+G$.
\end{proof}

\begin{proof}
[Proof of Lemma \ref{asymptotic gamma}]

From the power series expansion
\[
\sqrt{p\left(  t\right)  }=\allowbreak\sqrt{\left(  \overline{z}-1\right)
\left(  2z-2\right)  }\sqrt{t-1}+\mathcal{O}\left(  t-1\right)  ^{\frac{3}{2}%
},t\rightarrow1,
\]
we get for $z\rightarrow1,z\in\Gamma\cap%
\mathbb{C}
_{+}^{+}$
\begin{align*}
0  &  =\Re\int_{1}^{z}\sqrt{\left(  t-z\right)  \left(  t-\overline{z}\right)
\left(  t^{2}-1\right)  }dt\\
&  =\Re\left(  \frac{2\sqrt{2}}{3}\left\vert z-1\right\vert \left(
z-1\right)  ^{\frac{3}{2}}\right) \\
&  =\frac{2\sqrt{2}}{3}\left\vert z-1\right\vert \Re\left(  \left(
z-1\right)  ^{\frac{3}{2}}\right)  .
\end{align*}
It follows that
\[
\arg\left(  z-1\right)  ^{3}\equiv\pi\operatorname{mod}\left(  2\pi\right)
,z\rightarrow1,z\in\Gamma\cap%
\mathbb{C}
_{+}^{+},
\]
which gives the local structure of $\Gamma$ near $z=1.$

Writing $z=re^{ix}\in\Gamma\cap%
\mathbb{C}
_{+}^{+}$ with $r>0,x\in\left(  2,\pi\right)  ,$ in the integral defining
$\Gamma,$ we get with the change of variable $s=tre^{ix},t\in\left[
0,1\right]  $ :
\begin{equation}
\Re\int_{0}^{1}e^{ix}\sqrt{\left(  1-t\right)  \left(  1-te^{2ix}\right)
\left(  e^{2ix}t^{2}-\frac{1}{r^{2}}\right)  }=0. \label{r}%
\end{equation}
Taking the limits when $r\rightarrow\infty,$ equality (\ref{r}) becomes%
\[
\Re\left(  e^{3ix}\int_{0}^{1}t\sqrt{\left(  t-1\right)  \left(
t-e^{-2ix}\right)  }dt\right)  =0.
\]
With the change of variable $t=\alpha u+\beta,$ where%
\[
\alpha=\frac{1-e^{-2ix}}{2},\beta=\frac{1+e^{-2ix}}{2},
\]
we get%
\begin{align*}
0  &  =\Re\int_{i\cot x}^{1}\alpha^{2}e^{3ix}\left(  \alpha u+\beta\right)
\sqrt{\left(  u-\frac{1-\beta}{\alpha}\right)  \left(  u-\frac{e^{-2ix}-\beta
}{\alpha}\right)  }du\\
&  =\Re\left(  \alpha^{3}e^{3ix}\int_{i\cot x}^{1}\left(  u-i\cot x\right)
\sqrt{u^{2}-1}du\right) \\
&  =\Im\left(  \int_{i\cot x}^{1}\left(  t-i\cot x\right)  \sqrt{t^{2}%
-1}\right)  \sin^{3}x=0.
\end{align*}
It follows that%
\[
\Im\left(  \int_{i\cot x}^{0}\left(  t-i\cot x\right)  \sqrt{t^{2}-1}\right)
+\Im\left(  \int_{0}^{1}\left(  t-i\cot x\right)  \sqrt{t^{2}-1}\right)  =0,
\]
and then%
\begin{align*}
0  &  =\int_{0}^{\cot x}t\sqrt{t^{2}+1}-\cot x\int_{0}^{\cot x}\sqrt{t^{2}%
+1}+\int_{0}^{1}t\sqrt{1-t^{2}}\\
&  =\frac{1}{3\sin^{3}x}-\frac{1}{2}\cot x\left(  \frac{\cot x}{\sin x}%
+\arg\sinh\left(  \cot x\right)  \right)  ,
\end{align*}
which gives equation (\ref{asympt en l'inf}).

To prove the existence and uniqueness of solution of equation
(\ref{asympt en l'inf}) in $\left(  0,\frac{\pi}{2}\right)  ,$ we need to
study the function%
\[
x\overset{f}{\longmapsto}\sinh\left(  \frac{2}{3\cot x\sin^{3}x}-\frac{\cot
x}{\sin x}\right)  -\cot x;x\in\left(  0,\frac{\pi}{2}\right)  .
\]
It order to show that $f^{\prime}>0,$ it is sufficient to show that%
\[
\frac{2}{3\cos^{2}x}-\sin^{5}x>0;x\in\left(  0,\frac{\pi}{2}\right)
\allowbreak,
\]
which is straightforward by studying the rational function
\[
x\longmapsto\frac{2}{3\left(  1-t^{2}\right)  }-t^{5}=\frac{1}{3\left(
1-t^{2}\right)  }\left[  2-3\left(  1-t^{2}\right)  t^{5}\right]  ;t\in\left(
0,1\right)  .
\]

\end{proof}

\begin{proof}
[Proof of Proposition \ref{critical graph}]

\begin{enumerate}
\item[i)] Let us begin our proof by the first observation, that, if for some
$a,\gamma_{1}$ diverges to $\infty$ in the direction $D_{0},$ then there is no
short trajectory connecting $a$ and $\overline{a}$.????

Suppose that for some $a=x+iy\in\Omega_{1},$with $x>1,y>0,$ there is no short
trajectory connecting $a$ and $\overline{a}.$ Obviously, all critical
trajectories emanating from $a$ stay in the upper half plane. Let us denote by
$\gamma_{1}$ the only critical trajectory emanating from $z=1$ and diverging
to $\infty$ in the upper half plane. From Lemma \ref{conse teich1},
$\gamma_{1}$ follows a certain direction $D_{i},$ $i\in\left\{  0,1\right\}
$. Then, we claim that $i=0;$ otherwise, if $i=1,$ then, at most two critical
trajectories emanating from $a$ must diverge to $\infty$ in the same
direction, which contradicts Lemma \ref{conse teich1} again.

We denote by $a_{t}=x+ity,$ $t\geq0,$ and consider the set%
\[
N_{x}=\left\{  t>0\mid\text{there is no short trajectory connecting }%
a_{t}\text{ and }\overline{a_{t}}\right\}  .
\]

Since for any $t\in N_{x},$ $\gamma_{1}$ diverges to $\infty$ in the direction
$D_{0},$ by continuity of the trajectories (in the Hausdorff metric), we
conclude that $\inf\left(  N_{x}\right)  =t_{0}>0,$ (otherwise, the segment
$\left[  1,x\right]  $ will be a short trajectory, which contradicts
(\ref{cond necess}).then, for some $0<t<<1$ for s set $a.$

\item[ii)] If there is no short trajectory connecting $1$ and $a$ for some
$a\in\Gamma,$ then there is a critical trajectory $\gamma_{a}$ that emanates
from $a$ and diverges to $\infty$ in the same direction of $\gamma_{1}.$ From
the behaviour of orthogonal trajectories at $\infty,$ we take an orthogonal
trajectory $\sigma$ that hits $\gamma_{a}$ and $\gamma_{1}$ respectively in
two points $b$ and $c.$ (there are infinitely many such orthogonal
trajectories $\sigma$ ) We consider a path $\gamma$ connecting $z=1$ and $a$,
formed by the part of $\gamma_{1}$ from $z=1$ to $b$, the part of $\sigma$
from $b$ to $c;$ and the part of $\gamma_{a}$ from $c$ to $a$the critical
trajectory intersecting $\Gamma$ from $c$ to $a.$ Then
\begin{align*}
\Re\int_{\gamma}\sqrt{p\left(  t\right)  }dt  &  =\Re\int_{1}^{b}%
\sqrt{p\left(  t\right)  }dt+\Re\int_{b}^{c}\sqrt{p\left(  t\right)  }%
dt+\Re\int_{c}^{a}\sqrt{p\left(  t\right)  }dt\\
&  =\Re\int_{b}^{c}\sqrt{p\left(  t\right)  }dt\neq0,
\end{align*}
which contradicts the fact that $a\in\Gamma.$

\item[iii)] vv
\end{enumerate}
\end{proof}

\bigskip

\begin{proof}
[Proof of Proposition \ref{connection}]Solutions of the quadratic equation
(\ref{eq alg}) are%
\[
\mathcal{C}^{\pm}\left(  z\right)  =-r\left(  z\right)  \pm\sqrt{r^{2}\left(
z\right)  -4s\left(  z\right)  },
\]
with some choice of the square root. If $\nu$ is a real mother-body measure of
(\ref{eq alg}), then, condition (\ref{cauchy cond 1}) implies that
\[
\mathcal{C}_{\nu}\left(  z\right)  =-r\left(  z\right)  +\sqrt{r^{2}\left(
z\right)  -4s\left(  z\right)  }=-\frac{2e}{a}\frac{1}{z}+\mathcal{O}\left(
z^{-2}\right)  ,z\rightarrow\infty.
\]
we conclude that
\[
\nu\left(
\mathbb{C}
\right)  =-\frac{2e}{a}.
\]
From \cite[Theorem 1]{positive}, and since the Cauchy transform $\mathcal{C}%
_{\nu}\left(  z\right)  $ of the positive measure $\nu$ coincides a.e. in $%
\mathbb{C}
$ with an algebraic solution of the quadratic equation (\ref{eq alg}), it
follows that the support of this measure is a finite union of semi-analytic
curves and isolated points (see \cite{garnet}) . Let $\gamma$ be a connected
curve in the support of $\nu.$ For $t\in\gamma,$ we have
\[
\mathcal{C}_{\nu}^{+}\left(  t\right)  -\mathcal{C}_{\nu}^{-}\left(  t\right)
=\sqrt{Q\left(  t\right)  }.
\]
From Plemelj-Sokhotsky's formula, we have
\[
\frac{1}{2\pi i}\left(  \mathcal{C}_{\nu}^{+}\left(  t\right)  -\mathcal{C}%
_{\nu}^{-}\left(  t\right)  \right)  dt\in%
\mathbb{R}
^{\ast},t\in\gamma,
\]
and then we get%
\[
-Q\left(  t\right)  dt^{2}>0,t\in\gamma,
\]
which shows that $\gamma$ is a horizontal trajectory of the quadratic
differential $-Q\left(  z\right)  dz^{2}$ on the Riemann sphere. For more
details, we refer the reader to \cite{amf rakh1},\cite{pritsker}%
,\cite{Shapiro},\cite{bullgard}.
\end{proof}

In the end, this analysis can be done in the case where $p$ is a real
polynomial without real zeros: Suppose that $p\left(  z\right)  =\left(
z-a\right)  \left(  z-b\right)  \left(  z-\overline{a}\right)  \left(
z-\overline{b}\right)  $ with $a\neq b\in%
\mathbb{C}
,\Im\left(  a\right)  ,\Im\left(  a\right)  >0.$ Then the quadratic
differential has at list one short trajectory. Indeed, if no one of the
trajectories emanating from $a$ is short, then, they must diverge all to
$\infty$ in the $3$ different asymptotic directions in $%
\mathbb{C}
_{+}$. It follows from Lemma \ref{teich lemma} that at most two trajectories
emanating from $b$ can diverge to $\infty$ in $%
\mathbb{C}
_{+},$ and then, with consideration of symmetry, the remaining one is a short
trajectory that connects $b$ and $\overline{b}.$ See Figure \ref{7}.

\begin{figure}[th]
\centering\includegraphics[height=1.8in,width=2.8in]{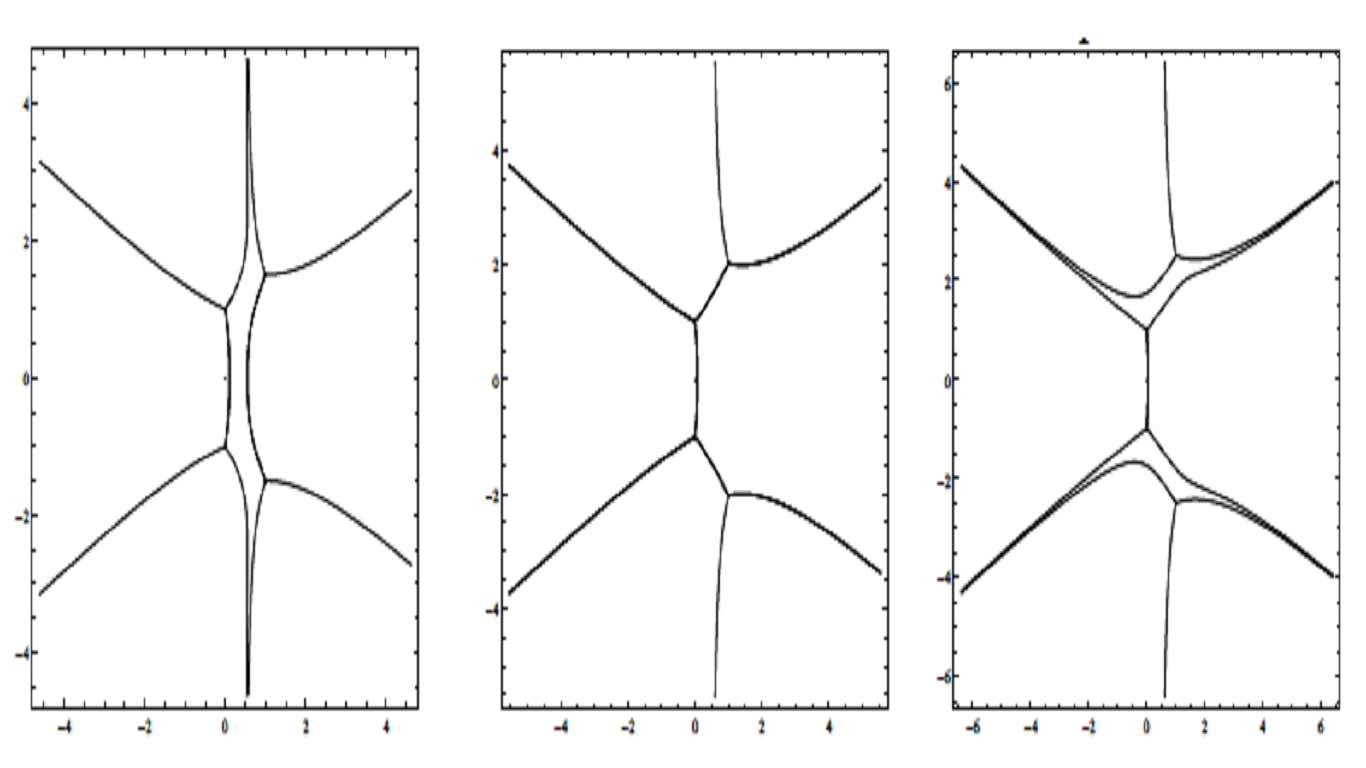}\caption{Structure of the trajectories near a simple zero (left),double zero (center), and pole of order 8 (right)}%
\label{7}%
\end{figure}

\begin{acknowledgement}
\bigskip\ Part of this work was carried out during a visit of F.T to the
university of Stockholm, Sweden. We would to thank Professor Boris Shapiro for
many helpful discussions.
\end{acknowledgement}

\bigskip

\texttt{Mondher Chouikhi(chouikhi.mondher@gmail.com)}

\texttt{Faouzi Thabet(faouzithabet@yahoo.fr)}

\texttt{Institut sup\'{e}rieur des sciences appliqu\'{e}es et de technologie }

\texttt{de} \texttt{Gab\`{e}s. Rue Omar Ibn Al khattab 6072. Gab\`{e}s.
Tunisia.}

\end{document}